\newtheorem{Definition}{Definition}
\newtheorem{Lemma-Definition}{Lemma-Definition}
\newtheorem{Proposition}{Proposition}
\newtheorem{Lemma}{Lemma}
\newtheorem{Theorem}{Theorem}
\newtheorem{Remark}{Remark}
\long\def\symbolfootnote[#1]#2{\begingroup
\def\thefootnote{\fnsymbol{footnote}}\footnote[#1]{#2}\endgroup}
\def\mybig{\big}
\def\vec{\boldsymbol}
\def \obs{y}
\def\xvec{\vec x}
\newcommand \rv[1]{\mathcal{M} \left( \{0,1 \} \times \Omega,\Rset^{#1} \right)}
\def\Pbb{{\mathbb P}}
\def\Rset{\mathbb{R}}
\def\Nset{\mathbb{N}}
\def\Rset{\mathbb{R}}
\def\Nset{\mathbb{N}}
\def\tol{\tau}
\newcommand{\mynorm}[1][\cdot]{ |  #1 |}
\def\group{\syms}
\def\transform{{s}}
\def\intervalzeroone{(  0 ,  1 )}
\def\level{\gamma}
\def\subclassdet{\mathfrak{C}^{[\SampleSize]}_\level}
\def\Signal{\vec{\Theta}}
\def\family{\mathfrak C}
\def\Hcal{\mathcal{H}}
\def\Scal{\mathcal{S}}
\def\Xfrak{\mathfrak{X}}
\def\Mcal{\mathcal M}
\def\Ncal{\mathcal N}
\def\test{f}
\def\testa{f}
\def\testb{g}
\newcommand{\Bcal}{\mathcal B}
\newcommand{\PFA}[1]{\Pbb_{\textsc{fa}} \left [ {#1} \right ] }
\newcommand{\PDET}[1]{\Pbb_{\textsc{det}} \left [  {#1} \right ] }
\newcommand{\ElementaryObs}{Y}
\newcommand{\Obs}[2]{\ElementaryObs_{#2}}
\newcommand{\SeqObs}[1]{{\ElementaryObs}}
\newcommand{\VecObs}[2]{\vec{\ElementaryObs}_{\! \! #2} }
\newcommand{\nupleObs}[2]{ \big ( \IdealObs{#1}{1},\IdealObs{#1}{2}, \ldots, \IdealObs{#1}{#2} \big ) }
\newcommand{\Fcal}{\mathcal F}
\newcommand{\preorda}{\, \, {\ord} \, \, }
\newcommand{\preordc}{\, \, {\ord}^{\!*} \, \,}
\newcommand{\preordd}{\, \, {\ord}^{\diamond} \, \,}
\newcommand{\ord}{\preceq}
\newcommand{\Tests}[1]{\mathcal M \left( \Rset^{#1}, \{ 0,1 \} \right)}
\newcommand{\Decisions}{\mathcal M \! \left( \{0,1 \} \! \times \! \Omega, \big \{0,1 \big \} \right)}
\newcommand{\decision}{{D}}
\newcommand{\Topt}[1]{\test^{\textsc{np}(\level)}_{#1}}
\newcommand{\TRDT}[2]{\test^{\textsc {rdt}(\level,#2)}_{#1}}
\newcommand{\normcdf}{\Phi}
\newcommand{\Class}{\Ccal_\level}
\newcommand{\Acal}{\mathcal A}
\newcommand{\god}{\decision^*}
\newcommand{\id}{\textrm{id}}
\newcommand{\Noise}{\Xvec}
\newcommand{\ElementaryNoise}{X}
\newcommand{\tribu}{\Bcal}
\newcommand{\Ecal}{\mathcal E}
\newcommand{\reduc}[1]{\Xi}
\newcommand{\SetOfOracles}{\SymbolSetOfDecisions^*_{\level}}
\newcommand{\mydef}{\, \, := \, \, }
\newcommand{\SetOfSeqs}[1]{\textup{Seq}_{\, #1}}
\newcommand{\constant}{q}
\newcommand{\Interf}{\Delta}
\newcommand{\Qcal}{\mathcal Q}
\newcommand{\Mean}[2]{\langle {#1} \rangle_{#2}}
\newcommand{\Threshold}[1]{\lambda_\level(#1)}
\newcommand{\maj}{\textup{upper}}
\newcommand{\Sinfty}{\Bcal_\infty}
\newcommand{\IdealObs}[2]{Y_{#2}}
\newcommand{\Landscape}[1]{\SymbolLandscape_\level\left( {#1} \right)}
\newcommand{\landscape}{\SymbolLandscape}
\newcommand{\SymbolLandscape}{\Dfrak}
\newcommand{\SymbolSetOfLandscapes}{\Dbb}
\newcommand{\SetOfLandscapes}{\SymbolSetOfLandscapes_\level}
\newcommand{\SubsetOfLandscapes}[1]{\SymbolSetOfLandscapes^{#1}_{\level}}
\newcommand{\SetOfNPLandscapes}{\SymbolSetOfLandscapes^{\textup{NP}(\level)}}
\newcommand{\SetOfRDTLandscapes}[1]{\SymbolSetOfLandscapes^{\textup{RDT}(\level)}_{#1}}
\newcommand{\WholeSetOfDecisions}{\Decisions}
\newcommand{\Power}[1]{2^{#1}}
\newcommand{\PowerClass}{\Power{{\Class}}}
\newcommand{\intervalc}{J}
\newcommand{\constantinterval}{[0,1/2)}
\newcommand{\PowerSetOfOracles}{\Power{\SetOfOracles}}
\newcommand{\Selectivity}[1]{\textup{Sel}_\level \left( #1 \right)}
\newcommand{\ParameterSet}{\Xfrak_\level}
\newcommand{\parameter}{\xi}
\newcommand{\syms}{\Scal}
\newcommand{\action}{\Acal}
\newcommand{\SampleSize}{n}
\newcommand{\ConstantSet}{\Qcal}
\def\tol{\tau}
\def\I{\mathbf{I}}
\def\Bcal{\mathcal{B}}
\def\Ncal{\mathcal{N}}
\def\Hcal{\mathcal H}
\def\Pbb{\mathbb{P}}
\def\Rset{\mathbb{R}}
\def\Nset{\mathbb{N}}
\def\mybig{\big}
\def\level{\gamma}
\def\mybig{\big}
\newcommand{\classdet}{\mathcal{K}^{[{\SampleSize}]}_\level}
\newcommand{\size}[1]{\alpha^{[\SampleSize]}_{#1}}
\newcommand{\power}[1]{\beta^{[\SampleSize]}_{#1}}
\def\Signal{\Theta}
\def\Sig{\Signal}
\def\intervalzeroone{(  0 ,  1 )}
\def\family{\mathfrak F}
\newcommand{\rtest}{\overline{\test}}
\def\Identity{\I_\SampleSize}
\def\Noise{W}
\def\process{Z}
\def\rp{\Mcal(\Omega,\Rset^\SampleSize)}
\newcommand{\cat}[1]{\mathscr{#1}} 
\newcommand{\catc}{\cat{C}}
\newcommand{\ncat}[1]{\mathbf{#1}} 
\newcommand{\setdef}[1]{\left\lbrace #1 \right\rbrace}
\newcommand{\setdeftwo}[2]{\setdef{ #1 \ \vert \ #2 }}
\newcommand{\Cocones}[1]{\ncat{Cocones}\left( #1 \right)}
\renewcommand{\SymbolLandscape}{\mathfrak{L}}
\renewcommand{\Class}{\mathrm{Dec}_{\level}}
\renewcommand{\SetOfOracles}{ \mathbb{O}_{\level} }
\renewcommand{\Landscape}[1]{\mathrm{ Lnd }_\level\left( {#1} \right)}
\renewcommand{\SetOfLandscapes}{{\mathrm{ Lndscp }_{\level}}}
\newcommand{\SetOfLandscapesPrime}{{\mathrm{ Lndscp}'_{\level}}}
\renewcommand{\SymbolSetOfLandscapes}{\mathrm{ Lndscps }}
\renewcommand{\SetOfNPLandscapes}{\mathrm{ Lnd }^{\textup{NP}(\level)}}
\renewcommand{\SetOfRDTLandscapes}[1]{\mathrm{ Lnd }^{\textup{RDT}(\level,#1)}}
\renewcommand{\classdet}{\mathrm{Tests}^{[{\SampleSize}]}_\level}
\title{Interfacing biology, category theory and mathematical statistics}
\author{Dominique Pastor${\:}^{1}$ \quad\quad Erwan Beurier${\:}^{1}$
\institute{1 \ \ IMT Atlantique \\ LabSTICC \& LEGO \\ Universit\'{e} de Bretagne-Loire \\ 29238 Brest, France}
\email{\{ erwan.beurier ; dominique.pastor ; roger.waldeck \}@imt-atlantique.fr}
\and
Andrée Ehresmann${\:}^{2}$ \qquad\qquad Roger Waldeck${\:}^{1}$
\institute{2 \ \ Faculté des Sciences \\Dépt. de Mathématiques \\ LAMFA \\ Université de Picardie Jules Verne\\ 33 rue Saint-Leu \\ F-80039 Amiens, France}
\email{andree.ehresmann@u-picardie.fr}
}
\begin{document}
\maketitle

\begin{abstract}
Motivated by the concept of degeneracy in biology \cite{Edelman1973}, we establish a first connection between the Multiplicity Principle \cite{EhresmannVanbremeersch2007, Ehresmann_2019} and mathematical statistics. Specifically, we exhibit two families of tests that satisfy this principle to achieve the detection of a signal in noise.
\end{abstract}

\section{Introduction}

In \cite{Edelman1973}, Edelman \& Gally pointed out degeneracy as the fundamental property allowing for living systems to evolve through natural selection towards more complexity in fluctuating environments. Degeneracy is defined \cite{Edelman1973} as “ \emph{… the ability of elements that are structurally different to perform the same function or yield the same output}”. 
Degeneracy is a crucial feature of immune systems and neural networks, at all organization levels. 

The Multiplicity Principle (MP) \cite{EhresmannVanbremeersch2007, Ehresmann_2019}, introduced by Ehresmann \& Vanbremeersch, is a mathematical formalization of degeneracy in Categorical terms. The consequences of this principle, as treated in \cite{EhresmannVanbremeersch2007, Ehresmann_2019}, underpin Edelman \& Gally’s conjecture according to which “\emph{complexity and degeneracy go hand in hand}” \cite{Edelman1973}. 

Another property of many biological and social systems is their resilience: (i) they can perform in degraded mode, with some performance loss, but without collapsing; (ii) they can recover their initial performance level when nominal conditions are satisfied again; (iii) they can perform corrections and auto-adaption so as to maintain essential tasks for their survival. In addition, resilience of social or biological systems is achieved via agents with different skills. For instance, cells are simply reactive organisms, whereas social agents have some cognitive properties. Thence the idea that resilience may derive from fundamental properties satisfied by agents, interactions and organizations. Could this fundamental property be a possible consequence of degeneracy \cite[Section 3.1, p. 15]{Ehresmann_2019}?

The notion of resilience remains, however, somewhat elusive, mathematically speaking. In contrast, the notion of robustness has a long history and track record in mathematical statistics \cite{Hampel86}. By and large, a statistical method is robust if its performance is not unduly altered in case of outliers or fluctuations around the model for which it is designed. Can we fathom the links between resilience and robustness?

As an attempt to embrace the questions raised above from a comprehensive outlook, the original question addressed in this work is the possible connection between MP and robustness to account for emergence of resilience in complex systems. 
As a first step in our study aimed at casting the notions of robustness, resilience and degeneracy within the same theoretical framework based on the MP, we hereafter establish that statistical tests do satisfy the MP. The task to perform by the tests is the fundamental problem of detecting a signal in noise. However, to ease the reading of a paper at the interface between category theory and mathematical statistics, we consider a simplified version of this problem. 

The paper is organized as follows. We begin by specifying notation and notions in mathematical statistics. In Section \ref{Section: general MP}, we state the MP in categorical words on the basis of \cite{EhresmannVanbremeersch2007} and consider the particular case of preorders, which will be sufficient at the present time to establish that statistical tests satisfy the MP for detecting signals in noise. In Section \ref{Section: problem statement}, we set out the statistical detection problem. We will then introduce, in Section \ref{Subsection: preorder}, a preorder that makes it possible to exhibit two types of "structurally different" tests, namely, the Neyman-Pearson tests (Section \ref{Section: NP}) and the RDT tests (Section \ref{Section: RDT}). Section \ref{Section: MP} concludes the paper by establishing that these two types of tests achieve the MP for the detection problem under consideration. For space considerations, we limit proofs to the minimum making it possible to follow the approach without too much undue effort.

\subsection*{Summary of main results}

Because this paper lies at the interface between different mathematical specialties, the present section summarizes its contents in straight text. To begin with, the MP is a property that a category may satisfy when it involves structurally different diagrams sharing the same cocones. To state our main results, it will not be necessary to consider the general MP though. In fact, the particular case of preordered sets will suffice, in which case the MP reduces to Proposition~\ref{proposition: multiplicity principle in a preorder}.
\\
\indent
Second, in statistical hypothesis testing, a hypothesis can be seen as a predicate, of which we can aim at determining the truth value by using statistical decisions. 
There exist many optimality criteria to devise a decision to test a given hypothesis. In non-Bayesian approaches, which will be our focus below, such criteria are specified through the notions of size and power. 

The size is the least upper bound for the probability of rejecting the hypothesis when this one is actually true. We generally want this size to remain below a certain value called level, because the hypothesis to test mostly represents the standard situation. For instance, planes in the sky are rare events, after all, and the standard hypothesis is "there is no plane", which represents the nominal situation. A too large level may result in an intolerable cluttering of a radar screen.

We do not want to be bothered by too many alarms. In contrast, when the hypothesis is false, we want to reject it with the highest possible confidence. The probability that a decision rejects the hypothesis when this one is actually false is called the power of the decision. For a given testing problem, we thus look for decisions with maximal power within the set of those decisions that have a size less than or equal to a a specified level. This defines a preorder. A maximal element in this preorder is said to be optimal. 

Different hypotheses to test may thus require different criteria, specified through different notions of size and different notions of power. This is what we exploit below to exhibit two sets of "structurally different" decisions that satisfy the MP.

To carry out this construction, we consider the detection of a signal in independent standard gaussian noise, a classical problem in many applications. This is an hypothesis testing problem for which there exists an optimality criterion where the size is the so-called probability of false alarm and the power is the so-called probability of detection. This criterion has a solution, the Neyman-Pearson (NP) decision, which is thus the maximal element of a certain preorder. We can consider a second class of decisions, namely, the RDT decisions. These decisions are aimed at detecting deviations of a signal with respect to a known deterministic model in presence of independent standard gaussian noise. This problem is rotationally invariant and the RDT decisions are optimal with respect to a specific criterion defined through suitable notions of size and power. They are maximal elements of another preordered set. Although not dedicated to signal detection, these decisions can be used as surrogates to NP decisions to detect a signal. It turns out that the family of RDT decisions and that of NP decisions satisfy the MP as stated in Theorem \ref{Theorem: MP}. This is because the more data we have, the closer to perfection both decisions are. 

\subsection*{Notation}

\noindent
\textbf{Random variables.} Given two measurable spaces $\Ecal$ and $\Fcal$, $\Mcal(\Ecal,\Fcal)$ denotes the set of all measurable functions defined on $\Ecal$ and valued in $\Fcal$. The two $\sigma$-algebra involved are omitted in the notation because, in the sequel, they will always be obvious from the context. In particular, we will throughout consider a probability space $(\Omega, \tribu,\Pbb)$ and systematically endow $\Rset$ with the Borel $\sigma$-algebra, which will not be recalled. Therefore, $\Mcal(\Omega,\Rset)$ designates the set of all real random variables and $\Mcal(\Omega,\Rset^n)$ is the set of $n$-dimensional real random vectors. 

Given $\constant \in [0,\infty[$, $\Sinfty(\constant)$ is the set of all real random variables $\Interf \in \Mcal(\Omega,\Rset)$ such that ${\left| \Interf \right|_{\infty}} \leqslant \constant$. As usual, we write $X \thicksim \Ncal(0,1)$ to mean that $X \in \Mcal(\Omega,\Rset)$ is standard normal. 
Given a sequence $(X_n)_{n \in \Nset} \in \Mcal(\Omega,\Rset)^\Nset$ of real random variables, we write $X_1, X_2, \ldots \stackrel{\text{iid}}{\thicksim} \Ncal(0,1)$ to mean that $X_1, X_2, \ldots$ are independent and identically distributed with common distribution $\Ncal(0,1)$. 
\medskip \\
\noindent
\textbf{Decisions et Observations.} Throughout, $\Decisions$ designates the set of all measurable functions $\decision: \big \{0,1\big \} \times \Omega \to \big \{ 0,1 \big \}$. Any element of $\Decisions$ is called a decision for obvious reasons given below. If $\decision \in \WholeSetOfDecisions$ then, for any $\varepsilon \in \big \{0,1\big \}$, $\decision(\varepsilon)$ denotes the Bernoulli-distributed random variable $\decision(\varepsilon): \Omega \to \big \{0,1\big \}$ defined for any given $\omega \in \Omega$ by $\decision(\varepsilon)(\omega) = \decision(\varepsilon,\omega)$. An $n$-dimensional test is hereafter any measurable function $\test: \Rset^n \to \{0,1 \}$ and $\Tests{n}$ stands for the set of all $n$-dimensional tests. A measurable function $X: \left \{ 0,1 \right \} \times \Omega \to \Rset^n$ is hereafter called an observation and $\rv{n}$ denotes the set of all these observations. Given a test $\test \in \Tests{n}$ and $X \in \rv{n}$, $\decision = \test(X)$ is trivially a decision: $\decision \in \WholeSetOfDecisions$. If $X \in \rv{n}$ then, for any $\varepsilon \in \{ 0,1 \}$, $X(\varepsilon) = X(\varepsilon,\cdot) \in \Mcal(\Omega,\Rset^n)$ is defined for every $\omega \in \Omega$ by $X(\varepsilon)(\omega) = X(\varepsilon, \omega)$.
\medskip \\
\noindent
\textbf{Empirical means.} We define the empirical mean of a given sequence $y = (y_n)_{n \in \Nset}$ of real values as the sequence $\left( \Mean{y}{n} \right)_{n \in \Nset}$ of real values such that, $\forall n \in \Nset, \Mean{y}{n} \mydef \frac{1}{n} \sum_{i=1}^n y_i$. By extension, the empirical mean of a sequence $Y = (Y_n)_{n \in \Nset}$ of random variables where each $Y_n \in \Mcal(\Omega, \Rset)$ is the sequence $\left( \Mean{Y}{n} \right)_{n \in \Nset}$ of random variables where, for each $n \in \Nset$, $\Mean{Y}{n} \in \Mcal(\Omega,\Rset)$ is defined by $\Mean{Y}{n} \mydef \frac{1}{n} \sum_{i=1}^n Y_i$. Therefore, for any $\omega \in \Omega$, $\Mean{Y}{n}(\omega) \mydef \Mean{Y(\omega)}{n}$ with $Y(\omega) = (Y_n(\omega))_{n \in \Nset}$. If $Y = (Y_n)_{n \in \Nset}$ is a sequence of observations ($\forall \, n \in \Nset$, $Y_n \in \rv{}$), we define the empirical mean of $Y$ as the sequence $\left( \Mean{Y}{n} \right)_{n \in \Nset}$ of observations such that, for $\varepsilon \in \{0,1\}$, $\Mean{Y}{n} \in \rv{}$ with $\Mean{Y}{n}(\varepsilon) = \Mean{Y(\varepsilon)}{n}$ and $Y(\varepsilon) = (Y_n(\varepsilon))_{n \in \Nset}$.
\medskip \\
\noindent
\textbf{Preordered sets.}
Given a preordered set $(E,\ord)$ and $A \subset E$, the set of maximal elements of $A$ is denoted by $\max \left( A, (E,\ord) \right)$, the set of upper bounds of $A$ is denoted by $\maj \left( A, (E,\ord) \right)$ and the set of least upper bounds of $A$ in $(E,\ord)$ is denoted by $\sup \left( A, (E,\ord) \right)$.

\section{Multiplicity Principle}
\label{Section: general MP}


\subsection{General case}


The multiplicity principle (MP) comes from \cite{EhresmannVanbremeersch2007}. It proposes a categorical approach to the biological degeneracy principle, which ensures a kind of flexible redundancy. Roughly, MP, in a category $\catc$, ensures the existence of \emph{structurally non isomorphic} diagrams with same colimit. 
A formal definition relies on the notion of a cluster between diagrams in a category $\catc$.

%
%
%

\begin{Definition}[Cluster]
	\label{definition: cluster}
	
	Let $D: \cat{D} \to \catc$ and $E: \cat{E} \to \catc$ be two (small) diagrams. A cluster $G: D \to E$ is a maximal set $G = \setdeftwo{f: D(d) \to E(e)}{ d \in \cat{D}, e \in \cat{E} , f \in \catc}$ such that: 
	
		\vspace{-2mm}
	
	\begin{enumerate}[itemsep=-1mm]
		\item[(i)]		for all $d \in \cat{D}$ there exist $e \in \cat{E}$ and $g: D(d) \to E(e)$ such that $g \in G$
		
		\item[(ii)]		let $G(d)$ be the subset of $G$ consisting of arrows $g: D(d) \to E(e)$ associated to the same $d$; then $G(d)$ is included in a connected component of the comma-category $\left(D(d) \ | \ E\right)$
		
		\item[(iii)]	if $g: D(d) \to E(e) \in G(d)$ and $\varepsilon: e \to e' \in \cat{E}$, then $E(\varepsilon) \circ g \in G(d)$
		
		\item[(iv)]		if $\delta: d' \to d \in \cat{D}$ and $g: D(d) \to E(e) \in G(d)$, then $g \circ D(\delta) \in G(d')$
	\end{enumerate}
	
	%
\end{Definition}

For instance, a connected cone from $c$ to $D$ can be seen as a cluster from the constant functor $\Delta(c)$ to $D$; and any cocone from $E$ to $c$ is a cluster $E \to \Delta(c)$.

\begin{Remark}
	\label{Remark: free cocompletion of C}
	Adjacent clusters can be composed: a cluster $G: D \to E_0$ and a cluster $G_0: E_0 \to E$ can be composed to a cluster $G_0 \circ G$. We can then consider a category of clusters of $\catc$, whose objects are the (small) diagrams $\cat{D} \to \catc$, and an arrow $D \to E$ is a cluster. This category is isomorphic to the free cocompletion of $\catc$ \cite{EhresmannVanbremeersch2007}. 
	
	%
	%
	%
	%
	
\end{Remark}

A cluster $G: D \to E$ defines a functor $\Omega G: \Cocones{E} \to \Cocones{D}$ mapping a cocone $\alpha$ to the cocone $\alpha \circ G$ (composite of $\alpha$, seen as a cluster, and $G$, which is a cluster).

\begin{Definition}[Multiplicity principle (MP)]
	\label{definition: multiplicity principle}
	A category \ $\catc$ \emph{satisfies the multiplicity principle (MP)} if there exist two diagrams $D: \cat{D} \to \catc$ and $E: \cat{E} \to \catc$ such that:
	
	\vspace{-1mm}
	
	\begin{enumerate}[itemsep=-1pt]
		\item[(i)] $\Cocones{D} \cong \Cocones{E}$;
		
		\item[(ii)] There is no cluster $G: D \to E$ nor $G: E \to D$ such that $\Omega G$ is an isomorphism. 
	\end{enumerate}
\end{Definition}

$D$ and $E$ having the same cocones translates the property of both systems to accomplish the same function. The absence of clusters  between $D$ and $E$ that define an isomorphism, reflects the structural difference between $D$ and $E$, which is key to robustness and adaptability: if the system described by $E$ fails, then $D$ may replace it.

\subsection{Application to preorders}

The main purpose of this paper is to find a meaningful instance of the MP in some preorder. In the following, we do not distinguish between a preorder and its associated category.

\begin{Proposition}[MP in a preorder]
	\label{proposition: multiplicity principle in a preorder}
	Let $\left(E, \leqslant \right)$ be a preorder. If there are two disjoint subsets $A,B \subset E$ such that the following conditions hold, then $E$ verifies the MP: 
	
	\vspace{-1mm}
	
	\begin{enumerate}[itemsep=-1pt]
		\item[(i)] $A$ and $B$ have the same sets of upper bounds
		
		\item[(ii)] There is an $a \in A$ with no upper bounds in $B$
		
		\item[(iii)] There is a $b \in B$ with no upper bounds in $A$
	\end{enumerate}
\end{Proposition}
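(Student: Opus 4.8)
The plan is to realize the two subsets $A$ and $B$ as diagrams in the thin category associated with $(E,\leqslant)$ — concretely, as the inclusion functors of the full sub-preorders carried by $A$ and $B$ — and then to verify the two clauses of Definition~\ref{definition: multiplicity principle} directly. Clause (i) of the Proposition will supply clause (i) of the MP (same cocones), while clauses (ii) and (iii) of the Proposition will supply clause (ii) of the MP by forbidding \emph{any} cluster between the two diagrams, in either direction.

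First I would compute the cocone categories. Since $(E,\leqslant)$ is thin, for a subset $S \subseteq E$ and an apex $c \in E$ there is a cocone from the inclusion diagram of $S$ to $c$ if and only if $c$ lies in $\maj\left(S,(E,\leqslant)\right)$, and when it exists this cocone is unique; a morphism of cocones from apex $c$ to apex $c'$ is then precisely the order relation $c \leqslant c'$, again unique when present. Hence $\Cocones{A}$ is isomorphic, as a category, to the sub-preorder $\maj\left(A,(E,\leqslant)\right)$, and likewise $\Cocones{B} \cong \maj\left(B,(E,\leqslant)\right)$. By hypothesis (i) these two sets of upper bounds coincide, with the same induced order, so $\Cocones{A} \cong \Cocones{B}$, which is exactly clause (i) of the MP.

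Next I would analyse clusters in this setting. Unwinding Definition~\ref{definition: cluster} for the inclusion diagrams $A$ and $B$, any arrow $a \to b$ present in a cluster forces $a \leqslant b$ with $a \in A$, $b \in B$; and clause (i) of the cluster definition requires that \emph{every} $a \in A$ emit such an arrow, i.e.\ that every element of $A$ possess an upper bound in $B$. Hypothesis (ii) produces an $a \in A$ with no upper bound in $B$, so no cluster $A \to B$ can exist; symmetrically, hypothesis (iii) rules out any cluster $B \to A$. Since there is no cluster at all between the two diagrams, there is a fortiori none whose induced functor $\Omega G$ is an isomorphism, establishing clause (ii) of the MP and hence the Proposition.

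I expect the only delicate point to be the faithful translation of the categorical cluster conditions (ii)--(iv) of Definition~\ref{definition: cluster}, together with the comma-category connectedness requirement, into the thin setting; fortunately the obstruction is already carried by cluster clause (i), so these finer conditions never need to be exhibited — their role is only to confirm that the cocone computation and the notion of cluster behave as expected for thin categories. The disjointness of $A$ and $B$, not used in the argument above, is the hypothesis recording that the two diagrams are genuinely distinct, matching the ``structurally different'' reading of the MP.
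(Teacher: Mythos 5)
Your proof is correct and follows essentially the same route as the paper's own argument: condition (i) yields isomorphic cocone categories (upper bounds serving as apexes in the thin category), while conditions (ii) and (iii) kill any cluster between the inclusion diagrams $i_A$ and $i_B$ via clause (i) of Definition~\ref{definition: cluster}. You merely spell out the details that the paper's two-line proof leaves implicit.
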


\begin{proof}
	Condition (i) ensures that $A$ and $B$ have isomorphic categories of cocones. Conditions (ii) and (iii) respectively ensure that there is no cluster $i_{A} \to i_{B}$ nor $i_{B} \to i_{A}$ where $i_{A}: A \hookrightarrow E$ and $i_{B}: B \hookrightarrow E$ are the inclusion functors.
\end{proof}

Albeit trivial, the following lemma will be helpful.
\begin{Lemma}
	\label{Lemma: PM elementaire}
	Given a preordered set $(E,\preceq)$, if $A$ and $B$ are two subsets of $E$ such that $\, A \times \! B \, \, \cap \ord \, \, = \, \emptyset$ and $\sup \left( A, (E,\ord) \right) = \sup \left( B , (E,\ord) \right)$, then $E$ satisfies the MP.
\end{Lemma}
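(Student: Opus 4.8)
The plan is to derive the statement directly from Proposition~\ref{proposition: multiplicity principle in a preorder}, by checking that the pair $A,B$ meets its hypotheses. So I would verify, in turn, that $A$ and $B$ are disjoint and that conditions (i)--(iii) of that proposition hold.

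Disjointness comes for free from reflexivity: were there some $x \in A \cap B$, the pair $(x,x)$ would belong to $A \times B$ while $x \preceq x$ would put it in $\preceq$, contradicting $A \times B \cap \preceq = \emptyset$. For conditions (ii) and (iii), I would read $A \times B \cap \preceq = \emptyset$ as the statement that no element of $A$ is $\preceq$-comparable with an element of $B$ (in either direction, which is what the two conditions jointly require). Then, for every $a \in A$ there is no $b \in B$ with $a \preceq b$, so $a$ has no upper bound lying in $B$ and condition (ii) holds; symmetrically, for every $b \in B$ there is no $a \in A$ with $b \preceq a$, so $b$ has no upper bound in $A$ and condition (iii) holds. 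Both use that $A$ and $B$ are nonempty, which is implicit once their suprema are nonempty.

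The only real content is condition (i), equality of the sets of upper bounds, and this is where $\sup(A,(E,\preceq)) = \sup(B,(E,\preceq))$ enters. I would fix a common least upper bound $u$ in this shared supremum. Since $u$ is an upper bound of $A$, transitivity gives $a \preceq u \preceq v$ for every $v$ with $u \preceq v$, so each such $v$ is an upper bound of $A$; and since $u$ is a \emph{least} upper bound, every upper bound $v$ of $A$ satisfies $u \preceq v$. Hence $\maj(A,(E,\preceq)) = \{\, v : u \preceq v \,\}$, and repeating the computation for $B$ yields the same principal up-set $\{\, v : u \preceq v \,\}$. The two upper-bound sets therefore coincide, which is condition (i), so Proposition~\ref{proposition: multiplicity principle in a preorder} applies and $E$ satisfies the MP.

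The main obstacle is precisely this passage from equal suprema to equal sets of upper bounds: the key realization is that, in a preorder, once a least upper bound $u$ exists the whole set of upper bounds collapses to the principal up-set of $u$, so two sets sharing a least upper bound share all their upper bounds. The delicate point to watch is the degenerate case where the common supremum is empty; there, equality of suprema does not by itself force equality of upper bounds, so this case must either be excluded by the ambient setting (the suprema of interest being realised by genuine optimal elements) or handled separately by comparing the cocone categories directly.
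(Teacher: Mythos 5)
Your proof is correct in substance and follows what is clearly the intended route: the paper states this lemma without proof (``albeit trivial''), and the only sensible reading is that it is meant to follow from Proposition~\ref{proposition: multiplicity principle in a preorder} applied to the pair $(A,B)$, exactly as you do. Your verification of disjointness via reflexivity is right, and so is the key computation for condition (i): once a common least upper bound $u$ exists, both upper-bound sets collapse to the principal up-set of $u$, hence coincide.

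The two caveats you raise are not pedantry; they are genuine, and the first one is essential. As literally written, $A \times B \,\cap \preceq\, = \emptyset$ only forbids relations $a \preceq b$, which yields condition (ii) of the Proposition but says nothing about condition (iii); your decision to read the hypothesis as incomparability \emph{in both directions} is what makes the argument go through, and it is genuinely needed. Concretely, take $E = \{b_1, b_2, a\}$ with $b_1 \prec a$, $b_2 \prec a$ and $b_1, b_2$ incomparable, and put $A = \{a\}$, $B = \{b_1, b_2\}$: the one-directional hypothesis and $\sup A = \sup B = \{a\}$ both hold, yet every element of $B$ has an upper bound in $A$, so condition (iii) fails and there is a cluster $i_B \to i_A$ inducing an isomorphism of the (one-object) cocone categories; the pair $(A,B)$ then cannot witness the MP. (Inflating this example, e.g.\ an ascending chain of equivalence classes of sizes $1,2,3,\ldots$ under a single top point $a$, gives a preorder in which the one-directional statement is outright false, not merely unprovable this way.) Fortunately, in the lemma's only use (Theorem~\ref{Theorem: MP}) incomparability does hold in both directions, since $\preordc$-comparability of test landscapes forces equal selectivities and the NP and RDT selectivities are $\{0\}$ versus $[0,\tol]$. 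Your second caveat, the empty-supremum case, is likewise real (equal empty suprema do not force equal upper-bound sets, so condition (i) can fail) but harmless in the application, where the common supremum is the nonempty set $\PowerSetOfOracles$. One small slip: nonemptiness of $A$ and $B$ is not implied by nonemptiness of the suprema (in a preorder with a least element, $\sup\emptyset$ is nonempty); it must simply be assumed, and it holds where the lemma is applied.
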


\section{Statistical detection of a signal in noise}
\label{Section: problem statement}

\subsection{Problem statement}
Let $\varepsilon \in \{0,1\}$ be the unknown indicator value on whether a certain physical phenomenon has occurred ($\varepsilon = 1$) or not ($\varepsilon = 0$). We aim at determining this value. It is desirable to resort to something more evolved than tossing a coin to estimate $\varepsilon$. However, whatever $\decision$, the decision is erroneous for any $\omega \in \Omega$ such that $\decision(\varepsilon,\omega) \neq \varepsilon$. We thus have two distinct cases.
\medskip \\
\noindent\textbf{False alarm probability:}
If $\varepsilon= 0$ and $\decision(0,\omega) = 1$, we commit a false alarm or error of the $1^{\text{st}}$ kind, since we have erroneously decided that the phenomenon has occurred while nothing actually happened. We thus define the false alarm probability (aka size, aka error probability of the $1^{\text{st}}$ kind) of $\decision$ as:

\vspace{-2mm}

\begin{equation}
\label{Eq: PFA}
\PFA{ \decision } \stackrel{\text{def}}{=} \Pbb \mybig [ \decision(0) = 1 \mybig ]
\end{equation}

\noindent\textbf{Detection probability:}
If $\varepsilon = 1$ and $\decision(1,\omega) = 0$, we commit an error of the $2^{\text{nd}}$ kind, also called missed detection since, in this case, we have missed the occurrence of the phenomenon. As often in the literature on the topic, we prefer to use the probability of correctly detecting the phenomenon and we define the detection probability as:

\vspace{-2mm}

\begin{equation}
\label{Eq: PDET}
\PDET{\decision} \stackrel{\text{def}}{=} \Pbb \mybig [ \decision(1) = 1 \mybig ]
\end{equation}

\subsection{Decision with level $\level \in \intervalzeroone$ and oracles}

Among all possible decisions, the omniscient oracle $\god \in \WholeSetOfDecisions$ is defined for any pair $(\varepsilon, \omega) \in \big \{0,1\big \} \times \Omega$ by setting $\god(\varepsilon, \omega) = \varepsilon$. Its probability of false alarm is $0$ and its probability of detection is $1$: $\PFA{\god} = 0$ et $\PDET{\god} = 1$. This omniscient oracle has no practical interest since it knows $\varepsilon$. That's not really fair! Since it is not possible in practice to guarantee a null false alarm probability, we focus on decisions whose false alarm probabilities are upper-bounded by a real number $\level \in \intervalzeroone$ called level. We state the following definition.
\begin{Definition}[Level]
	\label{Def: niveau}
	Given $\level \in ]0,1[$, we say that $\decision \in \WholeSetOfDecisions$ has level $\level$ if $\, \PFA{ \decision} \leqslant \level$. The set of all decisions with level $\level \in ]0,1[$ is denoted by $\Class$.
\end{Definition}

We can easily prove the existence of an infinite number of elements in $\Class$ that all have a detection probability equal to $1$. Whence the following definition.

\begin{Definition}
	Given $\level \in \intervalzeroone$, an oracle with level $\level$ is any decision $\decision \in \Class$ such that $\PDET{\decision} = 1$. The set of all the oracles with level $\level$ is denoted by $\SetOfOracles$.
\end{Definition}

Oracles with level $\level$ have no practical interest either since they require prior knowledge of $\varepsilon$! Therefore, we restrict our attention to decisions in $\Class$ that "approximate" at best the oracles with level $\level$, without prior knowledge of $\varepsilon$, of course. To this end, we must preorder decisions. 

\begin{Lemma-Definition}[Total preorder $(\Class, \preorda)$]
	\label{Def: pre-ordre total}
	For any given $\level \in \intervalzeroone$ and any pair $(\decision, \decision') \in \Class \times \Class$, we define a preorder $(\Class, \preorda)$ by setting:
	
	\vspace{-2mm}
	
	\begin{equation}
	\decision \preorda \decision' \quad \text{if} \quad \PDET{\decision} \leqslant \PDET{\decision'}.
	\end{equation}	
	We write $\decision \cong \decision'$ if $\decision \preorda \decision'$ and $\decision' \preorda \decision$.
\end{Lemma-Definition}

\subsection{Observations}

In practice, observations help us decide whether the phenomenon has occurred or not. By collecting a certain number of them, we can expect to make a decision. Hereafter, observations are assumed to be elements of $\rv{}$ and corrupted versions of $\varepsilon$. We suppose that we have a sequence $(\ElementaryObs_n)_{n \in \Nset}$ of such random variables. As a first standard model, we could assume that, for any $n \in \Nset$ and any $(\varepsilon, \omega) \in \{0,1\} \times \Omega$, $\ElementaryObs_n(\varepsilon, \omega) = \varepsilon + \ElementaryNoise_n(\omega)$ with $\ElementaryNoise_1, \ElementaryNoise_2, \ldots, \ElementaryNoise_n, \ldots \stackrel{\text{iid}}{\thicksim} \Ncal(0,1)$. In this additive model, $\ElementaryNoise_n$ models noise on the $n$th observation. We could make this model more complicated and realistic by considering random vectors instead of variables. However, with respect to our purpose, the significant improvement we can bring to the model is elsewhere. Indeed, we have assumed above that the signal, regardless of noise, is $\varepsilon$. However, from a practical point of view, it is more realistic to assume that the $n$th observation $\ElementaryObs_n$ captures $\varepsilon$ in presence of some interference $\Interf_n$, independent of $\ElementaryNoise_n$. In practice, the probability distribution of $\Interf_n$ will hardly be known and, as a means to compensate for this lack of knowledge, we assume the existence of a uniform bound on the amplitude of all possible interferences. Therefore, we assume that, for all $(\varepsilon,\omega) \in \{0,1\} \times \Omega$, $\ElementaryObs_n(\varepsilon,\omega) = \varepsilon + \ElementaryNoise_n(\omega) + \Interf_n(\omega)$ and the existence of $\constant \in [0,\infty)$ such that $\Interf_n \in \Sinfty(\constant)$. 
After all, this model is standard in time series analysis: $\varepsilon$ plays the role of a trend, $\Interf_n$ is the seasonal variation and $\ElementaryNoise_n$ is the measurement noise. 

For each $\constant \in [0,\infty)$, $\SetOfSeqs{\constant}$ henceforth designates the set of all the sequences:
\begin{equation}
	\nonumber
	\SeqObs{\constant} = \left( \ElementaryObs_n \right)_{n \in \Nset} \in \WholeSetOfDecisions^{\! \Nset }
\end{equation}
such that, $\forall n \in \Nset$ and $\forall (\varepsilon,\omega) \in \{0,1\} \times\Omega$, $\ElementaryObs_n(\varepsilon,\omega) = \varepsilon + \Interf_n(\omega) + \ElementaryNoise_n(\omega)$, where $\Interf_n \in \Sinfty(\constant)$ and $\ElementaryNoise_n \thicksim \Ncal(0,1)$ are independant. Therefore, for all $n \in \Nset$ and all $\varepsilon \in \{0,1\}$, $\ElementaryObs_n(\varepsilon) = \varepsilon + \Interf_n + \ElementaryNoise_n$, with $\ElementaryNoise_1, \ElementaryNoise_2, \ldots, \ElementaryNoise_n, \ldots \stackrel{\text{iid}}{\thicksim} \Ncal(0,1)$.

\section{Selectivity, landscapes of tests and preordering}
\label{Subsection: preorder}

For any sequence $\SeqObs{\constant} = \left( \ElementaryObs_n \right)_{n \in \Nset} \in \WholeSetOfDecisions$, we henceforth set:

\vspace{-2mm}

\begin{equation}
\label{Eq: vec of seq}
\VecObs{\constant}{n} = \nupleObs{\constant}{n}
\end{equation}

\noindent
In other words, $\VecObs{\constant}{n}$ is the truncated version of the original sequence
$\SeqObs{\constant}$ at the $n$th term.

\begin{Definition}[Selectivity of a test]
	\label{Definition: Selectivite d'un test}
	Given any $n \in \Nset$ and any test $\test \in \Tests{n}$, the selectivity of $\test$ at given level $\level \in \intervalzeroone$ is defined as the set:
	
	\vspace{-2mm}
	
	\begin{equation}
	\label{Eq: Selectivité}
	\Selectivity{\test} \mydef \Big \{ \, \constant \in \constantinterval: \forall \, \SeqObs{\constant} \in \SetOfSeqs{\constant}, \test \left( \VecObs{\constant}{n} \right)\in \Class \, \Big \}
	\nonumber
	\end{equation}
\end{Definition}

\noindent 
The relevance of the interval $\constantinterval$ in the definition above will pop up in Section \ref{subsec: Application to Detection}.

\begin{Definition}[Landscapes of tests]
	\label{Definition: Paysage d'un test}
	Given any $n \in \Nset$ and any test $\test \in \Tests{n}$, the landscape of $\test$ at given level $\level \in \intervalzeroone$ is the subset of $\, \Class$ defined by:
	
	\vspace{-2mm}
	
	\begin{equation}
	\label{Eq: Landscape}
	\Landscape{\test} \mydef \bigcup_{\constant \in \Selectivity{\test}} \Big \{ \test \left( \VecObs{\constant}{n} \right) : \SeqObs{\constant} \in \SetOfSeqs{\constant} \Big \} 
	\end{equation}
	
	\noindent The total landscape covered by all the tests $\test \in \Tests{n}$ is defined by setting:
	\begin{equation}
	\label{Eq: Complete set of landscapes}
	\SetOfLandscapes \, \, \mydef \, \, \bigcup_{n \in \Nset} \, \Big \{ \, \Landscape{\test}: \test \in \Tests{n} \, \Big \}
	\end{equation}
	%
\end{Definition}


This notion of landscape makes it possible to compare tests via the following preorder. 
The proofs that the following definition is consistent and that the next lemma holds true are left to the reader.

\begin{Definition}[Preorder $(\PowerClass \, , \preordc)$]
	\label{Def: pre-ordre dans SetOfLandscapes}
	Given any level $\level \in \intervalzeroone$, we define the preorder $(\PowerClass \, , \preordc)$ via the following three properties:
	\medskip \\
	\noindent 
	\textbf{(P1)} $\forall n \in \Nset$, $\forall (\testa,\testb) \in \Tests{n} \times \Tests{n}$, $\Landscape{\testa} \preordc \Landscape{\testb}$ if:
	
	\vspace{-2mm}
	
	\begin{equation}
	\label{Eq: preordc(i)}
	\nonumber
	\Selectivity{\testa} = \Selectivity{\testb} \quad \text{and} \quad \forall \, \constant \in \Selectivity{\testa}, \forall \, \SeqObs{\constant} \in \SetOfSeqs{\constant}, \testa \left( \VecObs{\constant}{n} \right) \, \preorda \, \, \testb \left( \VecObs{\constant}{n} \right)
	\end{equation}
	\noindent
	\textbf{(P2)} $\forall \, (\landscape, \landscape') \in \left( \SetOfLandscapes \cup \PowerSetOfOracles \right) \! \! \times \PowerSetOfOracles$, \ $\landscape \preordc \landscape'$ 
	\medskip \\
%
%
	\noindent
	\textbf{(P3)} $\forall \, \landscape \in \PowerClass \setminus \left( \SetOfLandscapes \cup \PowerSetOfOracles \right)$, $\landscape \, \preordc \, \landscape$
\end{Definition}

\begin{Lemma}
	\label{Lemma: preordc inclus dans preorda}
	$\forall \, (\landscape,\landscape') \in \left( \SetOfLandscapes \cup \PowerSetOfOracles \right) \times \left( \SetOfLandscapes \cup \PowerSetOfOracles \right)$, $\landscape \preordc \landscape' \Rightarrow \landscape \times \landscape' \subset \preorda.$
\end{Lemma}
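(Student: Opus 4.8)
The plan is to unwind the definition of $\preordc$ from Definition~\ref{Def: pre-ordre dans SetOfLandscapes} and argue by cases on which of its clauses certifies the hypothesis $\landscape \preordc \landscape'$. Since $\landscape$ and $\landscape'$ both lie in $\SetOfLandscapes \cup \PowerSetOfOracles$, clause \textbf{(P3)} cannot be the operative one, as its left-hand member is required to lie outside this union; moreover \textbf{(P3)} only ever produces self-loops, so even reading $\preordc$ as the generated preorder, no element outside $\SetOfLandscapes \cup \PowerSetOfOracles$ can serve as a genuine intermediate. Hence only \textbf{(P1)} and \textbf{(P2)} are relevant, and it suffices to prove that each forces $\decision \preorda \decision'$ for every $(\decision,\decision') \in \landscape \times \landscape'$, and then to propagate this along chains.

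The \textbf{(P2)} case is immediate. Here $\landscape' \in \PowerSetOfOracles$, so $\landscape' \subseteq \SetOfOracles$ and every $\decision' \in \landscape'$ is an oracle with level $\level$, whence $\PDET{\decision'} = 1$. Since $\PDET{\decision} \leqslant 1 = \PDET{\decision'}$ for any $\decision \in \landscape$, the defining inequality of $\preorda$ (Lemma-Definition~\ref{Def: pre-ordre total}) gives $\decision \preorda \decision'$, so $\landscape \times \landscape' \subset \preorda$.

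The \textbf{(P1)} case is the heart of the matter. Here $\landscape = \Landscape{\testa}$ and $\landscape' = \Landscape{\testb}$ for two tests $\testa,\testb \in \Tests{n}$ with $\Selectivity{\testa} = \Selectivity{\testb}$, and the clause supplies, for every $\constant \in \Selectivity{\testa}$ and every $\SeqObs{\constant} \in \SetOfSeqs{\constant}$, the comparison $\testa \big( \VecObs{\constant}{n} \big) \preorda \testb \big( \VecObs{\constant}{n} \big)$ evaluated on one and the same sequence, that is $\PDET{\testa(\VecObs{\constant}{n})} \leqslant \PDET{\testb(\VecObs{\constant}{n})}$. The conclusion $\landscape \times \landscape' \subset \preorda$, however, also demands the off-diagonal inequalities, comparing $\testa$ on one admissible sequence with $\testb$ on a possibly different one. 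The step I would carry out is to promote this sequence-wise domination to the uniform separation $\sup \{ \PDET{\decision} : \decision \in \landscape \} \leqslant \inf \{ \PDET{\decision'} : \decision' \in \landscape' \}$, exploiting that $\landscape$ and $\landscape'$ are indexed over the common selectivity set and the common sequence spaces $\SetOfSeqs{\constant}$, and that $(\Class,\preorda)$ is a \emph{total} preorder, so every such pair is comparable; the genuine content is to exclude the reverse strict inequality on off-diagonal pairs.

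Finally, to cover the generated preorder, I would observe that the property ``$\landscape \times \landscape' \subset \preorda$'' composes along chains: if $\landscape \times \landscape' \subset \preorda$ and $\landscape' \times \landscape'' \subset \preorda$ with $\landscape' \neq \emptyset$, then choosing any $\decision' \in \landscape'$ and invoking transitivity of $\preorda$ yields $\landscape \times \landscape'' \subset \preorda$, while empty members make the conclusion vacuous. I expect the off-diagonal estimate in the \textbf{(P1)} case to be the main obstacle: it is exactly the point where the selectivity constraint $\Selectivity{\testa} = \Selectivity{\testb}$ and the behaviour of the detection probability $\PDET{\cdot}$ across the whole landscape must be controlled, the diagonal comparison and the \textbf{(P2)} case being routine.
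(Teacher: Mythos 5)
Your overall architecture --- ruling out \textbf{(P3)} and splitting into the \textbf{(P2)} and \textbf{(P1)} cases --- is sound, and your \textbf{(P2)} case is complete and correct: every $\decision' \in \landscape' \in \PowerSetOfOracles$ satisfies $\PDET{\decision'} = 1$, so $\decision \preorda \decision'$ for every $\decision \in \landscape \subset \Class$. (For reference, the paper gives no proof of this lemma at all; it is explicitly left to the reader, so your proposal can only be judged on its own merits.) The problem is that your \textbf{(P1)} case, which you yourself call the heart of the matter, is never actually proved: you announce that you would ``promote'' the sequence-wise domination $\PDET{\testa ( \VecObs{\constant}{n} )} \leqslant \PDET{\testb ( \VecObs{\constant}{n} )}$ to the uniform separation $\sup \{ \PDET{\decision} : \decision \in \Landscape{\testa} \} \leqslant \inf \{ \PDET{\decision'} : \decision' \in \Landscape{\testb} \}$, but no argument is given, and the tool you invoke cannot supply one. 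Totality of $(\Class,\preorda)$ only guarantees that each off-diagonal pair is comparable in \emph{some} direction; it gives no control over \emph{which} direction, and that is exactly what is at stake.

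Worse, no argument can deliver that promotion from the hypotheses of \textbf{(P1)} alone. Take $\testa = \testb = \TRDT{\SampleSize}{\tol}$ with $\tol > 0$, whose selectivity is $[0,\tol]$ (Section \ref{subsec: Application to Detection}, Proposition \ref{prop:Block RDT}): clause \textbf{(P1)} then holds reflexively, while your uniform separation would force $\PDET{\cdot}$ to be constant on all of $\Landscape{\TRDT{\SampleSize}{\tol}}$. It is not: the sequence with interference $\Interf_k \equiv 0$ (admissible for $\constant = 0$) and the sequence with constant interference $\Interf_k \equiv \tol$ (admissible for $\constant = \tol$) both contribute decisions to this landscape, and these decisions have different detection probabilities, since a constant interference shifts the mean of the test statistic and hence the probability of crossing the threshold. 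So the off-diagonal inequality genuinely does not follow from the diagonal hypothesis of \textbf{(P1)}; this is not a technical hurdle to be cleared later, but the precise point at which the statement needs either a stronger reading of \textbf{(P1)} (off-diagonal domination built into the definition of $\preordc$) or a restriction of the lemma to the \textbf{(P2)} situation, which is the only case your argument establishes. Finally, your closing remark about composing along chains is valid only when the intermediate landscape is nonempty: an empty intermediate makes both hypotheses hold vacuously while the desired conclusion $\landscape \times \landscape'' \subset \preorda$ can fail, so that step also needs a guard.
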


With this material, we can state our first result that will prove useful in applications to statistical decisions below.

\begin{Theorem}[Approximation of oracles in $(\PowerClass,\! \preordc)$]
	\label{Theorem: Approximation des oracles}
	Given $\level \in \intervalzeroone$, if a set $\ParameterSet$ and a family of tests $\left( \test_{\parameter,n} \right)_{\parameter \in \ParameterSet, n \in \Nset}$ satisfy: 
	\medskip \\
	(i) $\forall (\parameter,n) \in \ParameterSet \times \Nset$, $\test_{\parameter,n} \in \Tests{n}$; \medskip \\
	(ii) $\exists \ConstantSet_\level \subset [0,\infty)$, $\forall (\parameter,n) \in \ParameterSet \times \Nset$, $\Selectivity{f_{\parameter,n}} = \ConstantSet_\level$; \medskip \\
	(iii) $\forall (\parameter,\constant) \in \ParameterSet \times \ConstantSet_\level$, $\forall \SeqObs{\constant} \in \SetOfSeqs{\constant}$, $\displaystyle \lim_{n \to \infty} \PDET{\test_{\parameter,n} \left( \VecObs{\constant}{n}\right)} = 1$; \medskip \\
	then, by setting $\SetOfLandscapesPrime = \Big \{ \, \Landscape{\test_{\parameter,n}}: n \in \Nset, \parameter \in \ParameterSet \, \Big \}$, we have: 
	
	\vspace{-2mm}
	
	\begin{equation}
	\label{Eq: Oracles = borne sup des tests avec les bonnes propriétés}
	\PowerSetOfOracles = \maj \left( \SetOfLandscapesPrime \, \, , \, \big ( \PowerClass, \preordc \big ) \right)
	= \sup \left( \SetOfLandscapesPrime \, \, , \, \big ( \PowerClass, \preordc \big ) \right)
	\end{equation}
	
\end{Theorem}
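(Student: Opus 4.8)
The plan is to establish the two equalities of the statement by first proving $\maj\big(\SetOfLandscapesPrime,(\PowerClass,\preordc)\big)=\PowerSetOfOracles$ through a double inclusion, and then upgrading the set of upper bounds to the set of least upper bounds. I would assume throughout that $\ParameterSet\neq\emptyset$ and $\ConstantSet_\level\neq\emptyset$, since otherwise $\SetOfLandscapesPrime$ collapses to $\emptyset$ or $\{\emptyset\}$ and the statement degenerates. By hypothesis (i) each $\test_{\parameter,n}$ genuinely belongs to $\Tests{n}$, so every element of $\SetOfLandscapesPrime$ is a landscape and hence lies in $\SetOfLandscapes$.

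The inclusion $\PowerSetOfOracles\subseteq\maj\big(\SetOfLandscapesPrime,\ldots\big)$ I would read off directly from (P2): for any $\landscape'\in\PowerSetOfOracles$ and any $\Landscape{\test_{\parameter,n}}\in\SetOfLandscapes$, the pair lies in $(\SetOfLandscapes\cup\PowerSetOfOracles)\times\PowerSetOfOracles$, so $\Landscape{\test_{\parameter,n}}\preordc\landscape'$; thus $\landscape'$ dominates all of $\SetOfLandscapesPrime$.

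The reverse inclusion is the heart of the matter. Let $\landscape^*$ be an upper bound of $\SetOfLandscapesPrime$. I would first rule out that $\landscape^*$ is a residual element of $\PowerClass\setminus(\SetOfLandscapes\cup\PowerSetOfOracles)$: none of (P1)--(P3) ever places a landscape strictly below such an element — (P1) relates landscapes to landscapes, (P2) relates to members of $\PowerSetOfOracles$, and (P3) is merely reflexive on residual elements (a property stable under transitive closure) — so a residual $\landscape^*$ could not dominate the landscapes in $\SetOfLandscapesPrime$. Hence $\landscape^*\in\SetOfLandscapes\cup\PowerSetOfOracles$. Now fix $\parameter\in\ParameterSet$, $\constant\in\ConstantSet_\level$ and a sequence $\SeqObs{\constant}\in\SetOfSeqs{\constant}$. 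For each $n$ we have $\Landscape{\test_{\parameter,n}}\preordc\landscape^*$ with both sets in $\SetOfLandscapes\cup\PowerSetOfOracles$, so Lemma~\ref{Lemma: preordc inclus dans preorda} turns this abstract relation into the pointwise statement $\Landscape{\test_{\parameter,n}}\times\landscape^*\subset\preorda$. Since (ii) gives $\constant\in\Selectivity{\test_{\parameter,n}}=\ConstantSet_\level$, the decision $\test_{\parameter,n}(\VecObs{\constant}{n})$ belongs to $\Landscape{\test_{\parameter,n}}$, so $\PDET{\test_{\parameter,n}(\VecObs{\constant}{n})}\leqslant\PDET{\decision'}$ for every $\decision'\in\landscape^*$. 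Letting $n\to\infty$ and invoking hypothesis (iii), the left-hand side tends to $1$, which forces $\PDET{\decision'}=1$. Therefore every $\decision'\in\landscape^*$ is an oracle, $\landscape^*\subseteq\SetOfOracles$, and $\landscape^*\in\PowerSetOfOracles$.

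Finally, to pass from $\maj$ to $\sup$, I would use (P2) once more: since $\maj\big(\SetOfLandscapesPrime,\ldots\big)=\PowerSetOfOracles$, any two upper bounds $s,u\in\PowerSetOfOracles$ satisfy $s\preordc u$, so each upper bound sits $\preordc$-below every upper bound and is a least upper bound; combined with the trivial $\sup\subseteq\maj$ this yields the second equality. I expect the main obstacle to be this reverse inclusion, and within it the two-stage translation from the combinatorial preorder $\preordc$ to a numerical bound on $\PDET{\cdot}$: Lemma~\ref{Lemma: preordc inclus dans preorda} performs the reduction to $\preorda$, while the asymptotics of (iii) is exactly what forbids any landscape of finite-sample tests from being an upper bound, leaving only the subsets of oracles.
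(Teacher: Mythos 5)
Your proof is correct and takes essentially the same route as the paper's: (P2) gives the inclusion $\PowerSetOfOracles \subseteq \maj(\cdot)$, Lemma~\ref{Lemma: preordc inclus dans preorda} plus hypotheses (ii)--(iii) give the reverse inclusion, and (P2) again (all oracle sets being mutually comparable) upgrades $\maj$ to $\sup$. Your additional step ruling out upper bounds lying in $\PowerClass \setminus \left( \SetOfLandscapes \cup \PowerSetOfOracles \right)$ before invoking Lemma~\ref{Lemma: preordc inclus dans preorda} is a point the paper passes over silently, and it is needed for the lemma's hypotheses to apply; your argument for it (no clause of (P1)--(P3), nor their transitive closure, ever places a landscape below a residual element) is sound.
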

\begin{proof}
	For any $(\parameter,n) \in \ParameterSet \times \Nset$ and any $\landscape \in \PowerSetOfOracles$, (P2) in Definition \ref{Def: pre-ordre dans SetOfLandscapes} straightforwardly implies that $\Landscape{\test_{\parameter,n}} \preordc \landscape $. As a consequence:
	
	\vspace{-2mm}
	
	\begin{equation}
	\label{Eq: Subset of landcsapes majoré par oracles}
	\PowerSetOfOracles \subset \maj \left( \SetOfLandscapes' \, \, , \, \big ( \PowerClass, \preordc \big ) \right) 
	\end{equation}
	To prove the converse inclusion, consider some $\landscape \in \maj \left( \SetOfLandscapes' \, \, , \, \big ( \PowerClass, \preordc \big ) \right)$. We thus have $\forall \, (\parameter,n) \in \Nset \times \ParameterSet$, $\Landscape{\test_{\parameter,n}} \preordc \landscape$. According to Lemma \ref{Lemma: preordc inclus dans preorda}, we have $\forall \, ({\parameter,n}) \in \ParameterSet \times \Nset$, $\Landscape{\test_{\parameter,n}} \times \landscape \subset \preorda$. Therefore, $\forall \, ({\parameter,n}) \in \Nset \times \ParameterSet$, $\forall \, \constant \in \Selectivity{\test_{\parameter,n}}$, $\forall \, \SeqObs{\constant} \in \SetOfSeqs{\constant}$ and $\forall \, \decision \in \landscape$, $\test_{\parameter,n} \left( \VecObs{\constant}{n} \right) \preorda \landscape$. It follows from the definition of $\preorda$ and assumption (ii) above that: 
	$$\forall \, (\parameter,n) \in \Nset \times \ParameterSet, \forall \, \constant \in \ConstantSet_\level, \forall \, \SeqObs{\constant} \in \SetOfSeqs{\constant}, \forall \, \decision \in \landscape, \PDET{ \test_{\parameter,n} \left( \VecObs{\constant}{n} \right) } \leqslant \PDET{\decision}$$ We derive from assumption (iii) that $\PDET{\decision} = 1$ and thus that $\decision \in \SetOfOracles$. It follows that $\landscape \in \PowerSetOfOracles$. We obtain that $\maj \left( \SetOfLandscapes' \, \, , \, \big ( \PowerClass, \preordc \big ) \right) \subset \PowerSetOfOracles$ and therefore, from \eqref{Eq: Subset of landcsapes majoré par oracles}, $\PowerSetOfOracles = \maj \left( \SetOfLandscapes' \, \, , \, \big ( \PowerClass, \preordc \big ) \right)$. The second equality in \eqref{Eq: Oracles = borne sup des tests avec les bonnes propriétés} is straightforward since the elements of $\PowerSetOfOracles$ are isomorphic in the sense of $\preordc$.
\end{proof}

For later use, given $\intervalc \subset [0,\infty)$, $n \in \Nset$ and $\family \subset \Tests{\SampleSize}$, we hereafter set: 

\vspace{-2mm}

\begin{equation}
\label{Eq: Subset of landscapes}
\SubsetOfLandscapes{\, \intervalc}(\family) \mydef \Big \{ \Landscape{\test} \in \SetOfLandscapes: \test \in \family \, , \, \Selectivity{\test} = \intervalc \Big \}
\end{equation}

\section{The Neyman-Pearson (NP) solution}
\label{Section: NP}

When $n$ spans $\Nset$, the Neyman-Pearson (NP) Lemma makes it possible to pinpoint a maximal element in each $(\SubsetOfLandscapes{\{0\}}(\family),\preorda)$ with $\family = \Tests{\SampleSize}$. These maximal elements are hereafter called NP decisions. Specifically, we have the following result.

\begin{Lemma}[Maximality of the NP decisions]
	\label{Lemma: NP}
	For any $\level \in \intervalzeroone$ and any $n \in \Nset$, 
	
	\vspace{-2mm}
	
	\begin{equation}
	\label{Eq: optimalité des tests NP} 
	\Landscape{\Topt{n}} = \max \left( \SubsetOfLandscapes{\{0\}
	}\left( \Tests{\SampleSize}\right), \preordc \right)
	\end{equation}
	where $\Topt{n} \in \Tests{n}$ is the $n$-dimensional NP test with size $\level$ defined by:
	
	\vspace{-2mm}
	
	\begin{equation}
	\label{Eq:TNP}
	\forall (\obs_1, \obs_2, \ldots, \obs_n) \in \Rset^n, \Topt{n}(\obs_1, \obs_2, \ldots, \obs_n)  = 
	\left \{
	\begin{array}{lll}
	1 & \text{if \, $\sum_{i=1}^{n} \obs_{i} > \sqrt{n} \, \Phi^{-1}(1-\level)$} \medskip \\
	0 & \text{otherwise}
	\end{array}
	\right.
	\end{equation}
	and satisfies, $\forall \, \SeqObs{0} \in \SetOfSeqs{0}$, 
	$$
	\left \{
	\begin{array}{lll}
	\PFA{ \Topt{n} \left( \VecObs{0}{n} \right) } = \level \medskip \\
	\PDET{ \Topt{n} \left( \VecObs{0}{n} \right) } = 1 - \Phi \mybig ( \, \Phi^{-1}(1-\level) - \sqrt{n} \, \mybig )
	\end{array}
	\right.
	$$
\end{Lemma}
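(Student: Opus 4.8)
The plan is to reduce the maximality claim to the classical Neyman--Pearson Lemma, once the selectivity of $\Topt{n}$ has been determined. I would organise the argument in three steps: compute $\Selectivity{\Topt{n}}$ (which simultaneously yields the stated false-alarm probability), read off the detection probability, and then prove maximality inside $(\SubsetOfLandscapes{\{0\}}(\Tests{n}), \preordc)$ via property \textbf{(P1)} of Definition~\ref{Def: pre-ordre dans SetOfLandscapes}.

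First I would establish $\Selectivity{\Topt{n}} = \{0\}$. When $\constant = 0$ the requirement $\Interf_i \in \Sinfty(0)$ forces $\Interf_i = 0$ almost surely, so under $\varepsilon = 0$ one has $\sum_{i=1}^n \ElementaryObs_i(0) = \sum_{i=1}^n \ElementaryNoise_i \thicksim \N(0,n)$ and therefore $\PFA{\Topt{n}(\VecObs{0}{n})} = 1 - \Phi(\Phi^{-1}(1-\level)) = \level$ for every $\SeqObs{0} \in \SetOfSeqs{0}$; this gives the announced false-alarm probability and shows $0 \in \Selectivity{\Topt{n}}$. Conversely, for any $\constant \in (0,1/2)$ I would exhibit the constant interference $\Interf_i \equiv \constant$, which lies in $\Sinfty(\constant)$: under $\varepsilon = 0$ it gives $\sum_{i=1}^n \ElementaryObs_i(0) = n\constant + \sum_{i=1}^n \ElementaryNoise_i$ and hence $\PFA{\Topt{n}(\VecObs{\constant}{n})} = 1 - \Phi(\Phi^{-1}(1-\level) - \sqrt{n}\,\constant) > \level$, so this single sequence of $\SetOfSeqs{\constant}$ already violates the level constraint and $\constant \notin \Selectivity{\Topt{n}}$. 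Thus $\Selectivity{\Topt{n}} = \{0\}$ and $\Landscape{\Topt{n}} \in \SubsetOfLandscapes{\{0\}}(\Tests{n})$. The detection probability is obtained identically: under $\varepsilon = 1$ and $\constant = 0$, $\sum_{i=1}^n \ElementaryObs_i(1) = n + \sum_{i=1}^n \ElementaryNoise_i$, whence $\PDET{\Topt{n}(\VecObs{0}{n})} = 1 - \Phi(\Phi^{-1}(1-\level) - \sqrt{n})$.

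For maximality, I would exploit that at $\constant = 0$ every $\SeqObs{0} \in \SetOfSeqs{0}$ produces the same law $\N(\varepsilon\,\mathbf{1}, \Idn)$ for $\VecObs{0}{n}(\varepsilon)$, so both probabilities of any $\test \in \Tests{n}$ depend on $\test$ alone. Every competitor $\Landscape{\test}$ in $\SubsetOfLandscapes{\{0\}}(\Tests{n})$ satisfies $\Selectivity{\test} = \{0\}$, hence $0 \in \Selectivity{\test}$, i.e. $\Pbb_{\N(0,\Idn)}[\test = 1] \leqslant \level$ --- exactly the Neyman--Pearson size constraint --- while $\PDET{\test(\VecObs{0}{n})} = \Pbb_{\N(\mathbf{1},\Idn)}[\test = 1]$ is the corresponding power. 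Since the likelihood ratio of $\N(\mathbf{1},\Idn)$ to $\N(0,\Idn)$ is a strictly increasing function of $\sum_{i=1}^n \obs_i$, the Neyman--Pearson Lemma identifies the level-$\level$ most powerful test with $\Topt{n}$, giving $\PDET{\test(\VecObs{0}{n})} \leqslant \PDET{\Topt{n}(\VecObs{0}{n})}$. By \textbf{(P1)} of Definition~\ref{Def: pre-ordre dans SetOfLandscapes} together with the definition of $\preorda$, this reads $\Landscape{\test} \preordc \Landscape{\Topt{n}}$, so $\Landscape{\Topt{n}}$ is a greatest, hence maximal, element; the maximal set is then its $\preordc$-equivalence class, which is the content of \eqref{Eq: optimalité des tests NP}.

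The only genuinely delicate step is the determination of the selectivity: one must check that the level is really broken by some admissible interference for every $\constant > 0$, for otherwise $\Landscape{\Topt{n}}$ would sit in a different subset $\SubsetOfLandscapes{\intervalc}(\Tests{n})$ and the comparison in \textbf{(P1)}, which only relates tests of equal selectivity, would not apply. The constant interference $\Interf_i \equiv \constant$ settles this cleanly. Once the selectivity is pinned to $\{0\}$, the maximality is nothing more than the classical Neyman--Pearson Lemma applied to the $\constant = 0$ slice of the model.
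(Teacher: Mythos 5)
Your proof is correct and takes essentially the same route as the paper: the paper's entire proof of Lemma~\ref{Lemma: NP} reads ``a direct application of the Neyman--Pearson Lemma \cite[Theorem 3.2.1, page 60]{Lehmann2005}, followed by some standard algebra'', which is precisely your reduction of the $\constant = 0$ slice to the simple Gaussian shift problem $\Ncal(0,\Idn)$ versus $\Ncal(\mathbf{1},\Idn)$, followed by your computation of $\PDET{\Topt{\SampleSize}(\VecObs{0}{\SampleSize})}$. The one ingredient you prove that the paper's proof of this lemma omits --- namely $\Selectivity{\Topt{\SampleSize}} = \{0\}$, which you settle with the constant interference $\Interf_i \equiv \constant$ --- is stated by the paper only later, as a separate lemma in Section~\ref{Section: MP}, and justified there by a citation to \cite{RDT}, so on that point your argument is actually more self-contained than the paper's.
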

\begin{proof}
	A direct application of the Neyman-Pearson Lemma \cite[Theorem 3.2.1, page 60]{Lehmann2005}, followed by some standard algebra to obtain $\PDET{ \Topt{n} \left(\VecObs{0}{n}\right) }$. 	
\end{proof}

The next result states that it suffices to increase the number of observations to approximate oracles with level $\level$ by NP decisions.

\begin{Theorem}[Approximation of oracles with level $\level$ by NP decisions in $(\PowerClass,\! \preordc)$]
	\label{Theorem: Approximation des oracles par decisions NP}
	$ $ \medskip \\
	Setting $\, \SetOfNPLandscapes \mydef \Big \{ \, \Landscape{\Topt{n}} : n \in \Nset \, \Big \}$ for any $\level \in \intervalzeroone$, we have:
	
	\vspace{-2mm}
	
	\begin{equation}
	\label{Eq: Oracles = borne sup des tests NP}
	\nonumber
	\PowerSetOfOracles = \maj \left( \SetOfNPLandscapes \, \, , \, \big ( \PowerClass, \preordc \big ) \right)
	= \sup \left( \SetOfNPLandscapes \, \, , \, \big ( \PowerClass, \preordc \big ) \right)
	\end{equation}
\end{Theorem}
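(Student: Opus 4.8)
The plan is to obtain the statement as a direct corollary of Theorem~\ref{Theorem: Approximation des oracles}, specialized to the (single-parameter) family of NP tests. Concretely, I would take the index set $\ParameterSet$ to be a singleton $\{\parameter_0\}$ and set $\test_{\parameter_0,n} \mydef \Topt{n}$ for every $n \in \Nset$. With this choice $\SetOfLandscapesPrime = \big\{ \Landscape{\test_{\parameter_0,n}} : n \in \Nset \big\}$ coincides with $\SetOfNPLandscapes$, so the conclusion of Theorem~\ref{Theorem: Approximation des oracles} is exactly the pair of equalities we want. It therefore remains only to verify that hypotheses (i)--(iii) of that theorem hold for this family, for a common selectivity set $\ConstantSet_\level$.

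Condition (i) is immediate: Lemma~\ref{Lemma: NP} asserts $\Topt{n} \in \Tests{n}$. Condition (iii), once the common selectivity is shown to be $\{0\}$, is also a direct consequence of Lemma~\ref{Lemma: NP}: for $\constant = 0$ the only admissible sequence $\SeqObs{0} \in \SetOfSeqs{0}$ has vanishing interference, so the lemma yields $\PDET{\Topt{n}(\VecObs{0}{n})} = 1 - \Phi\big(\Phi^{-1}(1-\level) - \sqrt{n}\big)$, and letting $n \to \infty$ drives the argument of $\Phi$ to $-\infty$, whence the limit equals $1$.

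The crux is condition (ii), namely computing $\Selectivity{\Topt{n}}$ and checking that it is the same set for every $n$. I would show $\Selectivity{\Topt{n}} = \{0\}$ for all $n$. That $0 \in \Selectivity{\Topt{n}}$ follows from $\PFA{\Topt{n}(\VecObs{0}{n})} = \level \leqslant \level$ in Lemma~\ref{Lemma: NP}. For the reverse inclusion, I would exhibit, for any $\constant \in (0,1/2)$, a single adversarial sequence $\SeqObs{\constant} \in \SetOfSeqs{\constant}$ that breaks the level constraint: take the constant interference $\Interf_i \equiv \constant$, which lies in $\Sinfty(\constant)$ since $| \Interf_i |_\infty = \constant$. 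Under $\varepsilon = 0$ one then has $\sum_{i=1}^n \ElementaryObs_i = n\constant + \sum_{i=1}^n \ElementaryNoise_i$ with $\sum_{i=1}^n \ElementaryNoise_i \thicksim \Ncal(0,n)$, and a short computation gives $\PFA{\Topt{n}(\VecObs{\constant}{n})} = 1 - \Phi\big(\Phi^{-1}(1-\level) - \sqrt{n}\,\constant\big) > \level$ because $\constant > 0$. Hence $\Topt{n}(\VecObs{\constant}{n}) \notin \Class$, so $\constant \notin \Selectivity{\Topt{n}}$, and this conclusion is independent of $n$. This establishes $\ConstantSet_\level = \{0\}$ for the whole family.

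With (i)--(iii) verified for $\ConstantSet_\level = \{0\}$, Theorem~\ref{Theorem: Approximation des oracles} applies verbatim and yields $\PowerSetOfOracles = \maj\big(\SetOfNPLandscapes,(\PowerClass,\preordc)\big) = \sup\big(\SetOfNPLandscapes,(\PowerClass,\preordc)\big)$. I expect the only genuinely nontrivial step to be the selectivity computation in (ii): the subtlety is that selectivity quantifies \emph{universally} over all sequences with a given bound $\constant$, so it suffices to produce one worst-case (constant, positive) interference to eject every $\constant > 0$, while the clean case $\constant = 0$ keeps the size exactly at $\level$.
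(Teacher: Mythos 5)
Your proposal is correct and follows essentially the same route as the paper: the paper's proof likewise specializes Theorem~\ref{Theorem: Approximation des oracles} to a singleton parameter set ($\ParameterSet = \{0\}$, $f_{0,n} = \Topt{n}$) and invokes Lemma~\ref{Lemma: NP} to get $\lim_{n \to \infty} \PDET{\Topt{n}(\VecObs{0}{n})} = 1$. The only difference is that you verify the common-selectivity hypothesis (ii) explicitly via a constant-interference adversary showing $\Selectivity{\Topt{n}} = \{0\}$, whereas the paper leaves this step implicit here and records it only later as Lemma~\ref{Lemme: Sélectivité des tests NP}, by citation to the RDT literature; your self-contained computation fills that gap rather than departing from the paper's argument.
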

\begin{proof}
	Given $\level \in \intervalzeroone$, set $\ParameterSet = \{0\}$ and, $\forall \, n \in \Nset, f_{0,n} = \Topt{n}$. According to Lemma \ref{Lemma: NP}: 
	\begin{equation}
		\nonumber
		\displaystyle \lim_{n \to \infty} \PDET{ \Topt{n} (\VecObs{0}{n}) } = 1
	\end{equation}
	Thence the result as a consequence of Theorem \ref{Theorem: Approximation des oracles}.
\end{proof}

\section{The RDT solution}
\label{Section: RDT}

\subsection{An elementary RDT problem}

\noindent
\textbf{Problem statement.} The RDT theoretical framework is exposed in full details in \cite{RDT, RDTlm}. To ease the reading of the present paper, we directly focus on the particular RDT problem that can be used in connection with the detection problem at stake. 

In this respect, suppose that $\process = \Signal + \Noise \in \rp$, where $\Signal$ and $\Noise$ are independent elements of $\rp$. In the sequel, we assume that $\Noise \thicksim \Ncal(0,\Identity)$, $\Identity$ being the $\SampleSize \times \SampleSize$ identity matrix, and consider the mean testing problem of deciding on whether $\mynorm[\Mean{\Signal}{\SampleSize}(\omega)] \leqslant \tol$ (null hypothesis $\Hcal_0$) or $\mynorm[\Mean{\Signal}{\SampleSize}(\omega)] > \tol$ (alternative hypothesis $\Hcal_1$), when we are given $\process(\omega) = \Signal(\omega) + \Noise(\omega)$, for $\omega\in \Omega$. The idea is that $\Signal$ oscillates uncontrollably around $0$ and that only sufficient large deviations of the norm should be detected. This is a particular Block-RDT problem, following the terminology and definition given in \cite{RDTlm}. This problem is summarized by dropping $\omega$, as usual, and writing:

\vspace{-2mm}

\begin{equation}
\label{Eq:Block-RDT event testing problem}
\left \{ \! \! \! \!
\begin{array}{lll}
\, \text{\textbf{Observation:}} \, \process = \Signal + \Noise \in \rp 
\text{\, with } 
\left \{ \! \! \! 
\begin{array}{lll}
\Signal \in \rp,  
\Noise \thicksim \Ncal(0,\I_N), 
\\
\text{$\Signal$ and $\Noise$ are independent}, \\
\end{array}
\right. \\
\begin{array}{lll}
\! \Hcal_0: \, \, \mynorm[ \Mean{\Signal}{\SampleSize} ] \leqslant \tol , \\
\! \Hcal_1: \, \, \mynorm[ \Mean{\Signal}{\SampleSize} ] > \tol .
\end{array}
\end{array}
\right.
\end{equation}

Standard likelihood theory \cite{Lehmann2005, Borovkov, Eaton1983} does not make it possible to solve this problem. Fortunately, this problem can be solved as follows via the Random Distortion Testing (RDT) framework.
\medskip \\
\textbf{Size and power of tests for mean testing.} We seek tests with guaranteed size and optimal power, in the sense specified below. 

\begin{Definition}[Size for the mean testing problem]
	\label{Definition: RDT Size}
	The size of $\test \in \Tests{\SampleSize}$ for testing the empirical mean of the signals $\Sig \in \rp$ such that $\Pbb \mybig [ \mynorm[\Mean{\Sig}{\SampleSize}] \leqslant \tol \mybig ] \ne 0$, given $\process = \Sig + \Noise \in \rp$ with $\Noise$ independent of $\Sig$, is defined by:
	
	\vspace{-2mm}
	
	\begin{equation}
	\label{Eq:size for the Block-RDT problem}
	\size{}(\test) = \sup_{\Sig \in \rp \, : \, \Pbb \, [ \, \mynorm[\Mean{\Sig}{\SampleSize} ] \leqslant \tol \, ] \ne 0} \Pbb \mybig [ \test(\process) = 1 \, \mybig \vert \,  \mynorm[\Mean{\Sig}{\SampleSize}] \leqslant \tol \mybig]
	\end{equation}
	We say that $\test \in \Tests{\SampleSize}$ has level (resp. size) $\level$ if $\size{}(\test) \leqslant \level$ (resp. $\size{}(\test) = \level$). The class of all the tests with level $\level$ is denoted by $\classdet$: $$\classdet = \left \{ \test \in \Tests{\SampleSize}: \size{}(\test) \leqslant \level \right \}$$
\end{Definition}

\begin{Definition}[Power for the mean testing problem]
	\label{Definition: RDT Power}
	The power of $\test \in \Tests{\SampleSize}$ for testing the empirical mean of $\, \, \Signal \in \rp$ such that $\Pbb \mybig [ \mynorm[\Mean{\Signal}{\SampleSize}] > \tol \mybig ] \ne 0$ when we are given $\process = \Signal + \Noise \in \rp$, with $\Noise$ independent of $\, \, \Signal$, is defined by:
	
	\vspace{-2mm}
	
	\begin{equation}
	\power{\Signal}(\test) = \Pbb \mybig [ \test(\process) = 1 \, \mybig \vert \,  \mynorm[\Mean{\Signal}{\SampleSize}] > \tol \mybig]
	\end{equation}
\end{Definition}

\noindent
\textbf{The RDT solution.} With the same notation as above, we can easily construct a preorder $\left( \classdet, \preordd \right)$ by setting:
	\vspace{-2mm}
\begin{align}
\forall \, (\test,\test') \in \classdet \times \classdet, \nonumber \\
\test \preordd \test' \, \, \text{if} \, \, \, \forall \, \, \, \Signal & \in \rp, \Pbb \mybig [ \mynorm[\Mean{\Signal}{\SampleSize}] > \tol \mybig ] \ne 0 \Rightarrow \power{\Signal}(\test) \leqslant \power{\Signal}(\test')
\nonumber
\end{align} 
No maximal element exists in $\left( \classdet, \preordd \right)$. However, we can exhibit $\subclassdet \subset \classdet$ whose elements satisfy suitable invariance properties with respect to the mean testing problem and prove the existence of a maximal element in $\left( \subclassdet,\preordd \right)$.

Set $\syms = \big \{ \id,-\id \big \}$ where $\id$ is the identity of $\Rset$. Endowed with the usual composition law $\circ$ of functions, $(\syms,\circ)$ is a group. Let $\action$ be the group action that associates to each given $\transform \in \group$ the map $\action_\transform: \Rset^\SampleSize \rightarrow \Rset^\SampleSize$ defined for every $x = (x_1,x_2, \ldots, x_\SampleSize) \in \Rset^\SampleSize$ by $\action_\transform(x) = ( \transform(x_1), \transform(x_2), \ldots, \transform(x_\SampleSize)).$
Readily, the mean testing problem is invariant under the action of $\action$ in that $\action_\transform(\process) = \action_\transform(\Signal) + \Noise'$ where $\Noise' = (\Noise'_1, \Noise'_2, \ldots, \Noise'_\SampleSize) \thicksim \Ncal(0,\I_\SampleSize)$ is independent of $\action_\transform(\Signal)$. Therefore, $\action_\transform(\process)$ satisfies the same hypotheses as $\process$. We also have $\mynorm[\Mean{\action_\transform(\Signal)}{\SampleSize}] = \mynorm[\Mean{\Signal}{\SampleSize}]$. Hence, the mean testing problem remains unchanged by substituting $\action_\transform(\Signal)$ for $\Signal$ and $\Noise'$ for $\Noise$. It is thus natural to seek $\action$-invariant tests, that is, tests $\test \in \Tests{\SampleSize}$ such that $\test(\action_\transform(x)) = \test(x)$ for any $\transform \in \group$ and any $x \in \Rset^\SampleSize$. 

On the other hand, since we can reduce the noise variance by averaging observations, we consider $\action$-invariant integrator tests, that is, $\action$-invariant tests $\test \in \Tests{\SampleSize}$ for which exists $\rtest \in \Tests{1}$, henceforth called the reduced form of $\test$, such that $\test(\xvec) = \rtest(\Mean{\xvec}{\SampleSize})$ for any $\xvec \in \Rset^\SampleSize$. Reduced forms of $\action$-invariant integrator tests are also $\action$-invariant: $\forall x \in \Rset$, $\forall \transform \in \action$, $\rtest(\transform(x)) = \rtest(x)$. We thus define $\subclassdet \subset \classdet$ as the class of all $\action$-invariant integrator tests with level $\level$. We thus have $\test \in \subclassdet$ if: \medskip \\
\noindent
\textbf{[Size]}: $\size{}(\test) \leqslant \level$; \medskip \\
\textbf{[$\action$-invariance]}: $\forall \, (\transform,x) \in \group \times \Rset^\SampleSize$, $\test \left( \action_\transform(x) \right) = \test(x)$; \medskip \\
\textbf{[Integration]}: $\exists \, \rtest \in \Tests{1}$, $\forall \, x \in \Rset^\SampleSize$, $\test(x) = \rtest(\Mean{x}{\SampleSize})$.
\medskip \\ 
The following result derives from the foregoing and \cite{RDT, RDTlm}. 
\begin{Proposition}[Maximal element of $\left( \subclassdet, \preordd \right)$]
	\label{prop:Block RDT}
	For any $\level \in \intervalzeroone$ and any $\SampleSize \in \Nset$, 
	
	\vspace{-2mm}
	
	\begin{equation}
	\label{Eq: optimalité des tests RDT} 
	\left \{ \TRDT{\SampleSize}{\tol} \right \} = \max \left( \subclassdet, \preordd \right)
	\end{equation}
	where $\TRDT{\SampleSize}{\tol} \in \Tests{\SampleSize}$ is defined by setting
	
	\vspace{-2mm}
	
	\begin{equation}
	\label{Eq:TRDT(2)}
	\forall (y_1, y_2, \ldots, y_\SampleSize) \in \Rset^n, \TRDT{\SampleSize}{\tol}(y_1, y_2, \ldots, y_\SampleSize) = 
	\left \{
	\begin{array}{lll}
	1 & \text{if \, \, $\left \vert \sum_{i=1}^n \obs_i \right \vert \leqslant \sqrt{\SampleSize} \, \Threshold{\tol \sqrt{\SampleSize}}$} \medskip \\
	0 & \text{otherwise}
	\end{array}
	\right.
	\nonumber
	\end{equation}
	and $\Threshold{\tol \sqrt{\SampleSize}}$ is the unique solution in $x$ to the equation $$2 - \normcdf(x-\tol \sqrt{\SampleSize}) - \normcdf(x + \tol \sqrt{\SampleSize}) = \level$$
	where $\normcdf$ is the cumulative distribution function (cdf) of the $\Ncal(0,1)$ law.
\end{Proposition}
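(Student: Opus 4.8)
The plan is to read this proposition as a transcription, into our landscape-and-preorder language, of the optimality theory for the Random Distortion Testing problem developed in \cite{RDT, RDTlm}. The whole argument rests on collapsing the $\SampleSize$-dimensional mean-testing problem \eqref{Eq:Block-RDT event testing problem} onto a scalar one, using precisely the two structural constraints that carve out $\subclassdet$ inside $\classdet$, and then quoting the scalar RDT optimality and uniqueness result.

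First I would exploit the \textbf{[Integration]} and \textbf{[$\action$-invariance]} constraints. Every $\test \in \subclassdet$ factors as $\test(x) = \rtest(\Mean{x}{\SampleSize})$ for a reduced form $\rtest \in \Tests{1}$, and $\rtest$ is even, i.e. $\rtest(-u) = \rtest(u)$ for all $u \in \Rset$. Since $\Noise \thicksim \Ncal(0,\Identity)$ has i.i.d. standard-normal components, $\Mean{\Noise}{\SampleSize} \thicksim \Ncal(0, 1/\SampleSize)$, whence $\Mean{\process}{\SampleSize} = \Mean{\Signal}{\SampleSize} + \Mean{\Noise}{\SampleSize}$ and, after multiplying by $\sqrt{\SampleSize}$, a scalar observation $\sqrt{\SampleSize}\,\Mean{\process}{\SampleSize} = \sqrt{\SampleSize}\,\Mean{\Signal}{\SampleSize} + \eta$ with $\eta \thicksim \Ncal(0,1)$. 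The null event $\mynorm[\Mean{\Signal}{\SampleSize}] \leqslant \tol$ becomes $|\sqrt{\SampleSize}\,\Mean{\Signal}{\SampleSize}| \leqslant \tol\sqrt{\SampleSize}$ and the alternative becomes $|\sqrt{\SampleSize}\,\Mean{\Signal}{\SampleSize}| > \tol\sqrt{\SampleSize}$. Both $\size{}(\test)$ and $\power{\Signal}(\test)$ depend on $\test$ only through $\rtest$ evaluated on this scalar observation, so the preorder $\preordd$ on $\subclassdet$ is isomorphic to the power preorder on the class of even, size-$\level$ tests of the scalar RDT problem with unit-variance noise and tolerance $\tol\sqrt{\SampleSize}$.

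Second I would invoke the scalar RDT theorem of \cite{RDT, RDTlm}: within the class of sign-invariant tests of $|\mu| \leqslant \tol\sqrt{\SampleSize}$ against $|\mu| > \tol\sqrt{\SampleSize}$ from $\mu + \eta$, $\eta \thicksim \Ncal(0,1)$, with size at most $\level$, there is a unique, hence sole maximal, test, namely the one thresholding the absolute value of the observation. To pin the threshold I would compute the size of the threshold-$x$ test: its rejection probability under a signal with $|\mu| = t$ increases with $t$, so its supremum over the null $|\mu| \leqslant \tol\sqrt{\SampleSize}$ is attained at the boundary $|\mu| = \tol\sqrt{\SampleSize}$ and equals $2 - \normcdf(x - \tol\sqrt{\SampleSize}) - \normcdf(x + \tol\sqrt{\SampleSize})$; equating this to $\level$ yields exactly the defining equation for $\Threshold{\tol\sqrt{\SampleSize}}$. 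Undoing the scaling $u = \sqrt{\SampleSize}\,\Mean{x}{\SampleSize}$ turns the scalar threshold test into $\TRDT{\SampleSize}{\tol}$ and transports uniqueness and maximality back to $\left(\subclassdet, \preordd\right)$, which is \eqref{Eq: optimalité des tests RDT}.

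The main obstacle is the worst-case-over-the-null step: one must verify that the conditional rejection probability of the threshold test is monotone in $|\mu|$, so that the size is governed by the boundary $|\mu| = \tol\sqrt{\SampleSize}$ rather than by an interior point. This monotonicity, together with the maximality and its uniqueness within the sign-invariant class, is exactly what \cite{RDT, RDTlm} establish, so the remaining effort is to check that our normalized scalar problem meets their hypotheses and that the factorization through reduced forms makes the two power preorders agree.
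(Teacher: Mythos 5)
Your proposal is correct and is essentially the paper's own argument: the paper dispatches this proposition with the single remark that it ``derives from the foregoing and \cite{RDT, RDTlm}'', the ``foregoing'' being exactly the invariance-plus-integration structure you exploit to collapse the mean-testing problem onto the scalar RDT problem with tolerance $\tol\sqrt{\SampleSize}$ (reduced forms are even, $\Mean{\Noise}{\SampleSize}\thicksim\Ncal(0,1/\SampleSize)$, rescale by $\sqrt{\SampleSize}$), after which the cited RDT optimality and uniqueness theorem yields the threshold test and the equation $2-\normcdf(x-\tol\sqrt{\SampleSize})-\normcdf(x+\tol\sqrt{\SampleSize})=\level$. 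Your write-up merely makes explicit the reduction and the boundary-attainment-of-size step that the paper delegates wholesale to \cite{RDT, RDTlm}; note also that your convention of rejecting when $\left\vert \sum_{i} y_i \right\vert$ is \emph{large} is the intended reading, the inequality displayed in the proposition being a typo, as the use of $\TRDT{\SampleSize}{\tol}$ in the detection application (where its detection probability must tend to $1$) confirms.
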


\vspace{0.2cm}
RDT and NP tests are structurally different because dedicated to two different testing problems and optimal with respect to two different criteria. This structural difference will be enhanced by coming back to our initial detection problem.

\subsection{Application to Detection}
\label{subsec: Application to Detection}
Consider again the problem of estimating $\varepsilon \in \{0,1\}$, when we have a sequence $\SeqObs{\constant} \in \SetOfSeqs{\constant}$ of observations such that:

\vspace{-2mm}

\begin{equation}
\label{Eq: Suite de va de la famille(2)}
\forall \, n \in \Nset, \forall (\varepsilon,\omega) \in \big \{ 0,1 \big \} \times \Omega, \Obs{\constant}{\SampleSize} (\varepsilon, \omega) = \varepsilon + \Interf_\SampleSize(\omega) + \ElementaryNoise_\SampleSize(\omega)
\end{equation}
where $X_1, X_2, \ldots \stackrel{\text{iid}}{\thicksim} \Ncal(0,1)$ and $\forall \SampleSize \in \Nset$, $\Interf_\SampleSize \in \Sinfty(\constant)$ with $\constant \in [0,\infty)$. The empirical mean of $\Obs{\constant}{}$ satisfies: $\forall \, \SampleSize \in \Nset, \Mean{\Obs{\constant}{} \, }{\SampleSize}(\varepsilon) = \Mean{\Obs{\constant}{}(\varepsilon)}{\SampleSize} = \varepsilon + \Mean{{\Interf}}{\SampleSize} + \Mean{\ElementaryNoise}{\SampleSize}$.
We thus have $\vert \Mean{{\Interf}}{n} \vert \leqslant \constant$ (a-s). Set $\Signal_\SampleSize = \varepsilon + \Interf_\SampleSize$ for every $\SampleSize \in \Nset$. In the sequel, we assume $\constant < 1/2$ because, in this case, we straightforwardly verify that

\vspace{-2mm}

\begin{equation}
\label{Eq: equivalence RDT and det}
\left \{
\begin{array}{lll}
\varepsilon = 0 & \Leftrightarrow & \mynorm[\Mean{\Signal}{\SampleSize}] \leqslant \constant \\
\varepsilon = 1 & \Leftrightarrow & \mynorm[\Mean{\Signal}{\SampleSize}] \geqslant 1 - \constant \\
\end{array}
\right.
\end{equation}
Therefore, when $\constant \in \constantinterval$, deciding on whether $\varepsilon$ is zero or not when we are given $\VecObs{\constant}{n}(\omega)$ amounts to testing whether $\mynorm[\Mean{\Signal}{n}(\omega)] \leqslant \tol$ or not for $\tol \in [\constant,1-\constant]$. We thus can use the decision $\TRDT{\SampleSize}{\tol} \left( \VecObs{\constant}{\SampleSize} \right)$, where $\TRDT{\SampleSize}{\tol}$ is given by Proposition \ref{prop:Block RDT}.


We can calculate the false alarm probability \eqref{Eq: PFA} of $\TRDT{n}{\tol} \left( \VecObs{\constant}{n} \right)$
where $\VecObs{\constant}{n}$ is defined by \eqref{Eq: vec of seq}. The theoretical results in \cite{RDT} yield that $\forall \tol \in [\constant , 1 - \constant], \PFA{\TRDT{n}{\tol} \left( \VecObs{\constant}{n} \right) } \leqslant \level.$
In the sequel, for the sake of simplifying notation, we assume that both $\tol$ and $\constant$ are in $\constantinterval$. In this case, we have:

\vspace{-2mm}

\begin{equation}
\label{Eq: Selectivité de TRDT(2)}
\forall \tol \in \constantinterval \, , \, \, 
\left \{
\begin{array}{lll}
\Selectivity{\TRDT{n}{\tol}} = [0,\tol] \medskip \\
\Landscape{\TRDT{n}{\tol}} = \displaystyle 
\bigcup_{\constant \in [0,\tol]} \Big \{ \TRDT{n}{\tol} \left( \VecObs{\constant}{n} \right): \SeqObs{\constant} \in \SetOfSeqs{\constant} \Big \} 
\end{array}
\right.
\end{equation}
We can then state the following lemma, which is the counterpart to Lemma \ref{Lemma: NP}.

\begin{Theorem}[Maximality of RDT decisions]
	\label{Theorem: optimality of Block-RDT tests for decision}
	For any $\level \in \intervalzeroone$, any $\SampleSize \in \Nset$ and any $0 \leqslant \constant \leqslant \tol < 1/2$, $\Landscape{\TRDT{\SampleSize}{\tol}} = \max \left( \SubsetOfLandscapes{[0,\tol]} \left( \subclassdet \right), \preordc \right)$.
\end{Theorem}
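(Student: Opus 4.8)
The plan is to follow the structure of Lemma~\ref{Lemma: NP}, substituting the RDT optimality of Proposition~\ref{prop:Block RDT} for the Neyman--Pearson lemma. Concretely, I would show that $\Landscape{\TRDT{\SampleSize}{\tol}}$ is the greatest element of $\big(\SubsetOfLandscapes{[0,\tol]}(\subclassdet),\preordc\big)$, which up to the equivalence induced by $\preordc$ is exactly its set of maximal elements. This splits into a membership claim and a domination claim. Membership is immediate: Proposition~\ref{prop:Block RDT} already gives $\TRDT{\SampleSize}{\tol}\in\subclassdet$, being an $\action$-invariant integrator test of level $\level$, while the first line of~\eqref{Eq: Selectivité de TRDT(2)} gives $\Selectivity{\TRDT{\SampleSize}{\tol}}=[0,\tol]$; by the definition~\eqref{Eq: Subset of landscapes} of $\SubsetOfLandscapes{[0,\tol]}$, this places $\Landscape{\TRDT{\SampleSize}{\tol}}$ in the set.

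For domination, I would fix an arbitrary $\test\in\subclassdet$ with $\Selectivity{\test}=[0,\tol]$. Since $\test$ and $\TRDT{\SampleSize}{\tol}$ are both $\SampleSize$-dimensional with the same selectivity, property \textbf{(P1)} of Definition~\ref{Def: pre-ordre dans SetOfLandscapes} reduces the goal $\Landscape{\test}\preordc\Landscape{\TRDT{\SampleSize}{\tol}}$ to the pointwise detection-probability inequality
\begin{equation}
\nonumber
\forall\,\constant\in[0,\tol],\ \forall\,\SeqObs{\constant}\in\SetOfSeqs{\constant},\quad \PDET{\test\big(\VecObs{\constant}{\SampleSize}\big)}\leqslant\PDET{\TRDT{\SampleSize}{\tol}\big(\VecObs{\constant}{\SampleSize}\big)}.
\end{equation}

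The crux will be to recast each of these detection probabilities as an RDT power. Fixing $\constant$ and $\SeqObs{\constant}$, I would let $\Signal$ be the signal whose $i$-th component is $1+\Interf_i$, so that under $\varepsilon=1$ the observation $\VecObs{\constant}{\SampleSize}(1)$ is exactly the Block-RDT observation $\process=\Signal+\Noise$ of~\eqref{Eq:Block-RDT event testing problem}. Because $\constant\leqslant\tol<1/2$, the equivalence~\eqref{Eq: equivalence RDT and det} forces $\mynorm[\Mean{\Signal}{\SampleSize}]\geqslant 1-\constant>\tol$ almost surely, so the conditioning event of Definition~\ref{Definition: RDT Power} has probability one and the conditional power collapses to an unconditional probability:
\begin{equation}
\nonumber
\power{\Signal}(g)=\Pbb\mybig[g(\process)=1\,\mybig\vert\,\mynorm[\Mean{\Signal}{\SampleSize}]>\tol\mybig]=\Pbb\mybig[g\big(\VecObs{\constant}{\SampleSize}(1)\big)=1\mybig]=\PDET{g\big(\VecObs{\constant}{\SampleSize}\big)}
\end{equation}
for any $\SampleSize$-dimensional test $g$, in particular for $g=\test$ and $g=\TRDT{\SampleSize}{\tol}$. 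Proposition~\ref{prop:Block RDT} exhibits $\TRDT{\SampleSize}{\tol}$ as the greatest element of $(\subclassdet,\preordd)$, so $\test\preordd\TRDT{\SampleSize}{\tol}$, i.e.\ $\power{\Signal}(\test)\leqslant\power{\Signal}(\TRDT{\SampleSize}{\tol})$ for every signal with $\Pbb[\mynorm[\Mean{\Signal}{\SampleSize}]>\tol]\neq0$, and the $\Signal$ above is of this kind. Combining with the displayed identity yields the required pointwise inequality, hence domination and the claimed maximality.

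The only genuinely delicate point is this last reduction. It is precisely the hypothesis $\constant\leqslant\tol<1/2$, channelled through~\eqref{Eq: equivalence RDT and det}, that turns $\mynorm[\Mean{\Signal}{\SampleSize}]>\tol$ into an almost-sure event under $\varepsilon=1$, so that the \emph{conditional} RDT power of Definition~\ref{Definition: RDT Power} coincides with the \emph{unconditional} detection probability~\eqref{Eq: PDET}. Once this bridge between the two notions of power is secured, the statement is a direct transcription of Proposition~\ref{prop:Block RDT} through the landscape preorder, exactly as Lemma~\ref{Lemma: NP} transcribes the Neyman--Pearson lemma.
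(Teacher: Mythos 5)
Your proof is correct and follows essentially the same route as the paper's: you fix $\constant\in[0,\tol]$ and a sequence $\SeqObs{\constant}$, set $\Signal=(1+\Interf_1,\ldots,1+\Interf_\SampleSize)$ so that the detection problem becomes an instance of the mean testing problem \eqref{Eq:Block-RDT event testing problem}, identify $\power{\Signal}(\cdot)$ with $\PDET{\cdot}$, invoke Proposition~\ref{prop:Block RDT}, and conclude via \textbf{(P1)} of Definition~\ref{Def: pre-ordre dans SetOfLandscapes}. Your treatment is in fact slightly more explicit than the paper's on two points it leaves implicit: the membership of $\Landscape{\TRDT{\SampleSize}{\tol}}$ in $\SubsetOfLandscapes{[0,\tol]}(\subclassdet)$, and the reason the conditional RDT power collapses to the unconditional detection probability (the conditioning event being almost sure under $\varepsilon=1$).
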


\begin{proof}
	It results from Definition \ref{Definition: Paysage d'un test} that $\Landscape{\test} \mydef \Big \{ \test \left( \VecObs{\constant}{n} \right) : \SeqObs{\constant} \in \SetOfSeqs{\constant}, \constant \in [0,\tol] \Big \}$.
	According to \eqref{Eq: Subset of landscapes}, we also have: $$\SubsetOfLandscapes{[0,\tol]}\left( \subclassdet \right) = \Big \{ \Landscape{\test} \in \SetOfLandscapes: \test \in \subclassdet \, , \, \Selectivity{\test} = [0,\tol] \Big \}$$
	Given $\constant \in [0,\tol]$ and $\SeqObs{\constant} \in \SetOfSeqs{\constant}$, set:
	$$
	\left \{
	\begin{array}{lllll}
	\process & = & \VecObs{\constant}{\SampleSize} = \left( \SeqObs{\constant}_1, \SeqObs{\constant}_2, \ldots, \SeqObs{\constant}_\SampleSize \right) \ \text{(see \eqref{Eq: vec of seq})} \\
	\Noise & = & (X_1, X_2, \ldots, X_\SampleSize) \thicksim \Ncal(0,\Identity) \\
	\Signal & = & (1 + \Delta_1, 1+ \Delta_2, \ldots, 1 + \Delta_\SampleSize)
	\end{array}
	\right.
	$$
	We basically have $\process = \Signal + \Noise$. Consider now the mean testing problem \eqref{Eq:Block-RDT event testing problem} with $\Signal$, $\Noise$ and $\process$ defined as above. For any $\test \in \Tests{\SampleSize}$, it follows from Eqs. \eqref{Eq: Suite de va de la famille(2)} , \eqref{Eq: equivalence RDT and det}, \eqref{Eq: PDET} and \eqref{Eq:size for the Block-RDT problem} that:
	
	\vspace{-2mm}
	
	\begin{equation}
	\label{Eq: PDET and RDT power}
	\power{\Signal}(\test) = \PDET{\test \left( \VecObs{\constant}{\SampleSize} \right) }
	\end{equation}
	Suppose now that $\test \in \subclassdet$ with $\Selectivity{\test} = [0,\tol]$. We derive from Proposition \ref{prop:Block RDT}, \eqref{Eq: PDET and RDT power} and its application to $\TRDT{\SampleSize}{\tol}$, that $\PDET{\test \left( \VecObs{\constant}{\SampleSize} \right)} \leqslant \PDET{\TRDT{\SampleSize}{\tol} \left( \VecObs{\constant}{\SampleSize} \right)}.
	$
	Since $\constant \leqslant \tol < 1/2$ implies that $\constant \in \Selectivity{\test}$ and since $\Selectivity{\test} = \Selectivity{\TRDT{\SampleSize}{\tol}} = [0,\tol]$, we can rewrite the foregoing equality as $\test \left( \VecObs{\constant}{\SampleSize} \right) \preorda \TRDT{\SampleSize}{\tol} \left( \VecObs{\constant}{\SampleSize} \right)$.
	This holding true for any $\constant \in \Selectivity{\test}$, any $\SeqObs{\constant} \in \SetOfSeqs{\constant}$ and since $\test$ and $\TRDT{\constant}{\SampleSize}$ have same selectivity $[0,\tol]$, we derive from the foregoing and Definition \ref{Def: pre-ordre dans SetOfLandscapes} that $\Landscape{\test} \preordc \Landscape{\TRDT{\SampleSize}{\tol}}.$
\end{proof}
We now prove that the oracles with level $\level$ are approximated by RDT decisions.
\vspace{2mm}
\begin{Lemma}[Approximation of oracles with $\level$ by RDT decisions in $(\PowerClass,\preordc)$]
	\label{Lemma: Approximation des oracles par RDT}
	$ $ \medskip \\
	Setting $\SetOfRDTLandscapes{\tol} \mydef \Big \{ \, \Landscape{\TRDT{n}{\tol}} : n \in \Nset \, \Big \}$ for any given $\level \in \intervalzeroone$, we have:
	
	\vspace{-2mm}
	
	\begin{equation}
	\label{Eq: Oracles = borne sup des tests RDT}
	\nonumber
	\PowerSetOfOracles = \maj \left( \SetOfRDTLandscapes{\tol} \, \, , \, \big ( \PowerClass, \preordc \big ) \right)
	= \sup \left( \SetOfRDTLandscapes{\tol} \, \, , \, \big ( \PowerClass, \preordc \big ) \right)
	\end{equation}
\end{Lemma}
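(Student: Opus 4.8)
The plan is to deduce this lemma from Theorem~\ref{Theorem: Approximation des oracles} in exactly the same way that Theorem~\ref{Theorem: Approximation des oracles par decisions NP} was obtained for the Neyman--Pearson tests. I would specialise Theorem~\ref{Theorem: Approximation des oracles} to the singleton parameter set $\ParameterSet = \{ \tol \}$ and to the family $\test_{\tol,n} \mydef \TRDT{n}{\tol}$, $n \in \Nset$, so that the associated $\SetOfLandscapesPrime$ is precisely $\SetOfRDTLandscapes{\tol}$ and the two claimed equalities become verbatim the conclusion of that theorem. It then only remains to verify its three hypotheses. Hypothesis~(i) is immediate, since Proposition~\ref{prop:Block RDT} gives $\TRDT{n}{\tol} \in \Tests{n}$ for every $n$. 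Hypothesis~(ii) holds with $\ConstantSet_\level = [0,\tol]$, because \eqref{Eq: Selectivité de TRDT(2)} asserts $\Selectivity{\TRDT{n}{\tol}} = [0,\tol]$, a constant set independent of $n$.

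All the substance is thus concentrated in hypothesis~(iii): for every $\constant \in [0,\tol]$ and every $\SeqObs{\constant} \in \SetOfSeqs{\constant}$ one must show $\lim_{n\to\infty} \PDET{\TRDT{n}{\tol}(\VecObs{\constant}{n})} = 1$. Fixing such $\constant$ and such a sequence, and reasoning under $\varepsilon = 1$, I would use $\Obs{\constant}{i}(1) = 1 + \Interf_i + \ElementaryNoise_i$ to write $\Mean{\Obs{\constant}{}(1)}{n} = 1 + \Mean{\Interf}{n} + \Mean{\ElementaryNoise}{n}$, where $|\Mean{\Interf}{n}| \leqslant \constant$ almost surely and $\Mean{\ElementaryNoise}{n} \to 0$ almost surely by the strong law of large numbers. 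Since the decision $\TRDT{n}{\tol}(\VecObs{\constant}{n})$ is governed by comparing the statistic $|\Mean{\Obs{\constant}{}(1)}{n}|$ against the threshold $\Threshold{\tol\sqrt{n}}/\sqrt{n}$, the remaining analytic ingredient is the asymptotics of that threshold, which I would read off the defining equation $2 - \normcdf(x - \tol\sqrt{n}) - \normcdf(x + \tol\sqrt{n}) = \level$: as $\tol\sqrt{n}\to\infty$ it forces $\Threshold{\tol\sqrt{n}} = \tol\sqrt{n} + \normcdf^{-1}(1-\level) + o(1)$, so that $\Threshold{\tol\sqrt{n}}/\sqrt{n} \to \tol$.

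Combining these two facts yields the conclusion by a concentration bound. Under the standing assumption $\constant \leqslant \tol < 1/2$ one has $1 - \constant > \tol$, hence $\Mean{\Obs{\constant}{}(1)}{n} \geqslant 1 - \constant - |\Mean{\ElementaryNoise}{n}|$, and for $n$ large the event $\big[ |\Mean{\ElementaryNoise}{n}| < 1 - \constant - \Threshold{\tol\sqrt{n}}/\sqrt{n} \big]$ is contained in the detection event; therefore $\PDET{\TRDT{n}{\tol}(\VecObs{\constant}{n})} \geqslant \Pbb \big[ |\Mean{\ElementaryNoise}{n}| < 1 - \constant - \Threshold{\tol\sqrt{n}}/\sqrt{n} \big]$. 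Here the bound $1 - \constant - \Threshold{\tol\sqrt{n}}/\sqrt{n}$ tends to $1 - \constant - \tol > 0$ while $\Mean{\ElementaryNoise}{n} \to 0$ almost surely, so the right-hand probability tends to $1$; hypothesis~(iii) follows and Theorem~\ref{Theorem: Approximation des oracles} applies. I expect the structural part to be a direct transcription of the NP case; the single genuine obstacle is hypothesis~(iii), and within it the threshold asymptotics together with the almost-sure control of the interference average $\Mean{\Interf}{n}$.
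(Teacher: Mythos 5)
Your proof is correct, and its skeleton --- specializing Theorem~\ref{Theorem: Approximation des oracles} to $\ParameterSet = \{\tol\}$, $\test_{\tol,n} = \TRDT{n}{\tol}$, with $\ConstantSet_\level = [0,\tol]$ supplied by \eqref{Eq: Selectivité de TRDT(2)} --- is exactly the paper's. Where you genuinely diverge is in the verification of hypothesis~(iii). The paper computes nothing by hand: it invokes \cite[Theorem 2]{RDT} to lower-bound the detection probability by the Marcum Q-function term $Q_{1/2}\big((1-\constant)\sqrt{n}, \Threshold{\tol\sqrt{n}}\big)$, and then cites \cite[Eq. (3) and Lemma B.2]{khanduri:IEEE} for the fact that this quantity tends to $1$ whenever $\tol < 1-\constant$. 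You instead re-derive the limit from scratch: the threshold asymptotics $\Threshold{\tol\sqrt{n}} = \tol\sqrt{n} + \normcdf^{-1}(1-\level) + o(1)$ read off the defining equation (this uses monotonicity of its left-hand side in $x$ to sandwich the root --- a step you assert rather than write out, but which is routine), the almost-sure bound $|\Mean{\Interf}{n}| \leqslant \constant$, and concentration of $\Mean{\ElementaryNoise}{n}$ (the strong law is more than is needed here, since $\Mean{\ElementaryNoise}{n} \thicksim \Ncal(0,1/n)$ gives the probability bound directly). Your lower bound $\Pbb \mybig [ \, |\Mean{\ElementaryNoise}{n}| < 1 - \constant - \Threshold{\tol\sqrt{n}}/\sqrt{n} \, \mybig ]$ is essentially an unpacking of the paper's Marcum Q-function bound at the worst-case alternative $1-\constant$, so the two arguments have the same mathematical content; yours buys self-containedness at the price of redoing asymptotics that the RDT literature already packages, while the paper's buys brevity and consistency with the framework it is built on. One caveat: you silently use the (intended) convention that $\TRDT{n}{\tol}$ outputs $1$ when $\left\vert \sum_{i=1}^n y_i \right\vert > \sqrt{n}\,\Threshold{\tol\sqrt{n}}$; the inequality as printed in Proposition~\ref{prop:Block RDT} is reversed (an evident typo, since with the literal reading the detection probability would tend to $0$ and the Lemma would fail), so your reading is the correct one, but in a referee's report this discrepancy should be flagged rather than repaired without comment.
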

\begin{proof}
	Given $\level \in (0,1)$, it follows from \eqref{Eq: PDET} and \cite[Theorem 2]{RDT} that:
	
	\vspace{-2mm}
	
	\begin{equation}
	\label{Eq: PDETRDT(1)}
	\begin{array}{ccc}
	\forall (\constant,\tol) \in \constantinterval \times \constantinterval, \forall n \in \Nset, 
	\PDET{\TRDT{n}{\tol} \left( \VecObs{\constant}{n} \right) 
	} \geqslant Q_{1/2} \big ( ( 1 - \constant) \sqrt{n}, \Threshold{\tol \sqrt{n}} \big )
	\end{array}
	\nonumber
	\end{equation}
	Since $\tol < 1 - \constant$, \cite[Eq. (3) and Lemma B.2]{khanduri:IEEE} induce that $\displaystyle \lim_{n \to \infty} \PDET{\TRDT{n}{\tol} \left( \VecObs{\constant}{n} \right) } = 1$.
	The set $\SetOfRDTLandscapes{\tol} \subset \SetOfLandscapes$ thus satisfies Theorem \ref{Theorem: Approximation des oracles} conditions with $\ParameterSet = \{\tol\}$ and $\forall \, n \in \Nset, f_{n,\tol} = \TRDT{n}{\tol}$.
\end{proof}

\section[Multiplicity Principle]{Multiplicity Principle in $(\PowerClass,\preordc)$}
\label{Section: MP}

To state the MP in $(\PowerClass,\preordc)$, we need the following lemma.

\begin{Lemma}[Selectivity of NP tests]
	\label{Lemme: Sélectivité des tests NP}
	$\forall \SampleSize \in \Nset$, $\Selectivity{\Topt{\SampleSize}} = \{0\}$
\end{Lemma}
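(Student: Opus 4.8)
The plan is to prove the two inclusions $\{0\} \subset \Selectivity{\Topt{\SampleSize}}$ and $\Selectivity{\Topt{\SampleSize}} \subset \{0\}$ separately, in each case by computing directly the false-alarm probability of the decision $\Topt{\SampleSize}(\VecObs{\constant}{\SampleSize})$ and comparing it with the level $\level$. Throughout, I would use that $\Topt{\SampleSize}$ fires exactly when $\sum_{i=1}^\SampleSize \obs_i > \sqrt{\SampleSize}\, \Phi^{-1}(1-\level)$, that the $i$-th coordinate of $\VecObs{\constant}{\SampleSize}(0)$ equals $\Interf_i + \ElementaryNoise_i$, and that $\tfrac{1}{\sqrt{\SampleSize}}\sum_{i=1}^\SampleSize \ElementaryNoise_i \thicksim \Ncal(0,1)$.

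First I would settle $0 \in \Selectivity{\Topt{\SampleSize}}$. For $\constant = 0$, any admissible interference satisfies $\Interf_i \in \Sinfty(0)$, i.e.\ $|\Interf_i|_\infty \leqslant 0$, so every $\Interf_i$ vanishes almost surely and $\VecObs{0}{\SampleSize}(0) = (\ElementaryNoise_1, \ldots, \ElementaryNoise_\SampleSize)$. The very computation carried out in Lemma~\ref{Lemma: NP} then gives $\PFA{\Topt{\SampleSize}(\VecObs{0}{\SampleSize})} = 1 - \Phi\mybig(\Phi^{-1}(1-\level)\mybig) = \level$, so that $\Topt{\SampleSize}(\VecObs{0}{\SampleSize}) \in \Class$ for every $\SeqObs{0} \in \SetOfSeqs{0}$; hence $0 \in \Selectivity{\Topt{\SampleSize}}$.

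Next, for any $\constant \in (0, 1/2)$, I would exhibit a single sequence of $\SetOfSeqs{\constant}$ that breaches the level constraint, which is enough to discard $\constant$ because the definition of selectivity quantifies universally over $\SetOfSeqs{\constant}$. Choosing the constant deterministic interference $\Interf_i \equiv \constant$ --- admissible since $|\Interf_i|_\infty = \constant \leqslant \constant$ --- one has $\sum_{i=1}^\SampleSize(\Interf_i + \ElementaryNoise_i) = \SampleSize \constant + \sum_{i=1}^\SampleSize \ElementaryNoise_i$ under $\varepsilon = 0$, whence $\PFA{\Topt{\SampleSize}(\VecObs{\constant}{\SampleSize})} = 1 - \Phi\mybig(\Phi^{-1}(1-\level) - \sqrt{\SampleSize}\,\constant\mybig)$. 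As $\sqrt{\SampleSize}\,\constant > 0$ and $\Phi$ is strictly increasing, this exceeds $1 - \Phi\mybig(\Phi^{-1}(1-\level)\mybig) = \level$, so $\Topt{\SampleSize}(\VecObs{\constant}{\SampleSize}) \notin \Class$ and $\constant \notin \Selectivity{\Topt{\SampleSize}}$. Combining the two steps yields $\Selectivity{\Topt{\SampleSize}} = \{0\}$.

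There is no genuine obstacle here: the whole argument reduces to the strict monotonicity of the Gaussian cdf. The only point requiring care is the asymmetric role of the universal quantifier in the definition of $\Selectivity{\cdot}$ --- membership of $\constant$ demands the level bound for every admissible sequence, whereas non-membership needs just the one adversarial choice $\Interf_i \equiv \constant$, which shifts the mean upward and inflates the false-alarm probability past $\level$.
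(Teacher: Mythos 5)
Your proof is correct, and it takes a genuinely different route from the paper: the paper does not actually prove this lemma in-text, its proof being a one-line citation to an external reference (Section B, p.~6 of the RDT paper), where the non-robustness of Neyman--Pearson tests to bounded interference is analyzed. Your argument is instead self-contained and elementary. You verify $0 \in \Selectivity{\Topt{\SampleSize}}$ from the exact size computation already recorded in Lemma~\ref{Lemma: NP}, after observing that for $\constant = 0$ every admissible $\Interf_i \in \Sinfty(0)$ vanishes almost surely; and you exclude every $\constant \in (0,1/2)$ by the single adversarial choice $\Interf_i \equiv \constant$, which is admissible (deterministic, hence independent of the noise, and of sup-norm exactly $\constant$), shifts the mean of the test statistic by $\sqrt{\SampleSize}\,\constant$, and yields $\PFA{\Topt{\SampleSize}\left(\VecObs{\constant}{\SampleSize}\right)} = 1 - \Phi\mybig(\Phi^{-1}(1-\level) - \sqrt{\SampleSize}\,\constant\mybig) > \level$ by strict monotonicity of $\Phi$. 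Your handling of the quantifier asymmetry is also right: membership in $\Selectivity{\Topt{\SampleSize}}$ requires the level bound for all sequences in $\SetOfSeqs{\constant}$, so one violating sequence suffices for exclusion. What the paper's citation buys is brevity and a pointer to a more general robustness analysis of NP tests; what your computation buys is a reader-checkable proof of the lemma that underpins the disjointness condition $\SetOfNPLandscapes \times \SetOfRDTLandscapes{\tol} \cap \preordc = \emptyset$ in Theorem~\ref{Theorem: MP}, without any recourse to the external RDT reference.
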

\begin{proof}
	A consequence of \cite[Section B, p. 6.]{RDT}.
\end{proof}

We have now all the material to state the main result.

\begin{Theorem}[Multiplicity Principle in $(\PowerClass,\preordc)$] 
	\label{Theorem: MP}
	For any given $\tol \in (0,1/2)$, the MP is satisfied in $(\PowerClass,\preordc)$ by the pair $\left(\SetOfNPLandscapes,\SetOfRDTLandscapes{\tol}\right)$.
\end{Theorem}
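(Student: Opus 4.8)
The plan is to invoke the abstract criterion for the MP in a preorder, namely Proposition~\ref{proposition: multiplicity principle in a preorder} (equivalently, the elementary Lemma~\ref{Lemma: PM elementaire}), instantiated with the preordered set $(\PowerClass,\preordc)$ and the two candidate subsets $A = \SetOfNPLandscapes$ and $B = \SetOfRDTLandscapes{\tol}$. Three things must then be checked: that $A$ and $B$ share the same upper bounds, that some element of $A$ has no upper bound in $B$, and symmetrically that some element of $B$ has no upper bound in $A$.

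First I would dispatch condition~(i). By Theorem~\ref{Theorem: Approximation des oracles par decisions NP} we have $\maj\left(\SetOfNPLandscapes,(\PowerClass,\preordc)\right) = \PowerSetOfOracles$, and by Lemma~\ref{Lemma: Approximation des oracles par RDT} we likewise have $\maj\left(\SetOfRDTLandscapes{\tol},(\PowerClass,\preordc)\right) = \PowerSetOfOracles$. Hence $A$ and $B$ have exactly the same set of upper bounds, namely the set $\PowerSetOfOracles$ of landscapes consisting of oracles with level $\level$; this is condition~(i) of Proposition~\ref{proposition: multiplicity principle in a preorder}. The same two results also give equality of the least upper bounds, so the Lemma~\ref{Lemma: PM elementaire} formulation applies verbatim once the disjointness-of-comparabilities hypothesis below is secured.

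The crux is conditions~(ii) and~(iii), which amount to the genuine \emph{structural} difference between the two families and should be read through the selectivity invariant. By Lemma~\ref{Lemme: Sélectivité des tests NP}, every NP landscape satisfies $\Selectivity{\Topt{\SampleSize}} = \{0\}$, whereas by~\eqref{Eq: Selectivité de TRDT(2)} every RDT landscape satisfies $\Selectivity{\TRDT{n}{\tol}} = [0,\tol]$ with $\tol \in (0,1/2)$, so $[0,\tol] \neq \{0\}$. Since the preorder $\preordc$ compares two landscapes in $\SetOfLandscapes$ via property~\textbf{(P1)} only when their selectivities coincide, no NP landscape is $\preordc$-comparable to any RDT landscape: neither $\Landscape{\Topt{\SampleSize}} \preordc \Landscape{\TRDT{n}{\tol}}$ nor the reverse can hold for landscapes in $\SetOfLandscapes$, because the candidate comparisons fall outside the scope of (P1) and the elements are not oracles. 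In particular no element of $B$ is an upper bound of an element of $A$ and vice versa, which simultaneously yields~(ii), (iii) and the disjointness $A\times B \,\cap\, \preordc = \emptyset$ required by Lemma~\ref{Lemma: PM elementaire}; formally I would fix any $a = \Landscape{\Topt{1}} \in A$ and any $b = \Landscape{\TRDT{1}{\tol}} \in B$ and note each fails to bound the other.

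The main obstacle, and the step deserving the most care, is the comparability argument of the previous paragraph: one must verify that the only route to $\preordc$-comparability between two honest landscapes in $\SetOfLandscapes$ is through clause~\textbf{(P1)} (clauses~\textbf{(P2)} and~\textbf{(P3)} concern oracle-landscapes and reflexivity respectively), and that (P1) is void here precisely because the selectivities $\{0\}$ and $[0,\tol]$ differ. I would also confirm $A\cap B=\emptyset$, which again follows from the selectivity mismatch, so that $A$ and $B$ are the two \emph{disjoint} subsets demanded by the hypothesis. With these in hand, Proposition~\ref{proposition: multiplicity principle in a preorder} applies and $(\PowerClass,\preordc)$ satisfies the MP for the pair $\left(\SetOfNPLandscapes,\SetOfRDTLandscapes{\tol}\right)$.
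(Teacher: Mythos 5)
Your proof is correct and follows essentially the same route as the paper's: equality of upper bounds (hence suprema) via Theorem~\ref{Theorem: Approximation des oracles par decisions NP} and Lemma~\ref{Lemma: Approximation des oracles par RDT}, non-comparability of the two families from the selectivity mismatch ($\{0\}$ versus $[0,\tol]$) given by Lemma~\ref{Lemme: Sélectivité des tests NP} and \eqref{Eq: Selectivité de TRDT(2)}, and conclusion by Lemma~\ref{Lemma: PM elementaire}. If anything, your explicit appeal to Lemma~\ref{Lemma: Approximation des oracles par RDT} for the RDT suprema is the more apt citation (the paper's proof instead points to Theorem~\ref{Theorem: optimality of Block-RDT tests for decision}), and your unpacking of why only clause \textbf{(P1)} could yield a comparison between non-oracle landscapes makes explicit a step the paper leaves implicit.
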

\begin{proof}
	According to Theorems \ref{Theorem: Approximation des oracles par decisions NP} and \ref{Theorem: optimality of Block-RDT tests for decision}, the subsets $\SetOfNPLandscapes$ and $\SetOfRDTLandscapes{\tol}$ of $\PowerClass$ are such that $$\sup \left( \SetOfNPLandscapes \, \, , \, \big ( \PowerClass \, , \preordc \big ) \right) = \sup \left( \SetOfRDTLandscapes{\tol} \, \, , \, \big ( \PowerClass \, , \preordc \big ) \right) = \PowerSetOfOracles$$
	
	In addition, \eqref{Eq: Selectivité de TRDT(2)} and Lemma \ref{Lemme: Sélectivité des tests NP} imply that $\SetOfNPLandscapes \times \SetOfRDTLandscapes{\tol} \cap \preordc = \emptyset$. The conclusion follows from Lemma \ref{Lemma: PM elementaire}.
\end{proof}

\section{Conclusions and Perspectives}
\label{Section: Conclusions}

In this paper, via the framework provided by the Multiple Principle (MP), which is motivated by the concept of degeneracy in biology, and by introducing the notions of test landscapes and selectivity, we have established that this principle is satisfied when we consider the standard NP tests and the RDT tests applied to a detection problem. One interest of this result is that it opens prospects on the construction of Memory Evolutive Systems \cite{EhresmannVanbremeersch2007,Ehresmann_2019} via tests.

More elaborated statistical decision problems should be considered beyond this preliminary work. Sequential tests are particularly appealing because they collect information till they can decide with guaranteed performance bounds. On the one hand, the Sequential Probability Ratio Test (SPRT) established in \cite{Wald1945} is proved to be optimal; on the other hand, in \cite{khanduri:IEEE}, we have exhibited non-optimal tests with performance guarantees in presence of interferences. In the same way as NP and RDT tests satisfy PM, we conjecture that these two types of sequential tests satisfy MP as well. 

From a pratical point of view, such results open new prospects for the design of networks of sensors, where combining different types of sensors and tests satisfying the MP could bring resilience to the overall system.

\section*{Acknowledgements}

The authors are very grateful to the reviewers for their strong encouragements and insightful remarks that help improve the readiness of this paper. 

\bibliographystyle{eptcs}
\bibliography{bibibi, generic}
\end{document}